\documentclass{amsart}

\usepackage{amssymb}
\usepackage{overpic}
\usepackage{graphics}
\usepackage{epsfig}
\usepackage{amsmath}
\usepackage{amsfonts}
\usepackage{paralist}
\usepackage{setspace}

\newtheorem{theorem}{Theorem}[section]
\newtheorem{corollary}{Corollary}

\newtheorem{lemma}[theorem]{Lemma}
\newtheorem{proposition}{Proposition}

\newtheorem{example}{Example}
\theoremstyle{definition}
\newtheorem{definition}[theorem]{Definition}
\newtheorem{remark}{Remark}


\begin{document}

\title[Isospectral Transformations of Dynamical Networks]{Isospectral Compression and Other Useful Isospectral Transformations of Dynamical Networks}

\author{L. A. Bunimovich$^1$}

\author{B. Z. Webb$^2$}

\keywords{Dynamical Network, Graph Transformations, Spectral Equivalence, Global Stability}

\subjclass[2000]{05C50, 15A18, 37C75}

\maketitle

\begin{center}
$^{1}$ \smaller{ABC Math Program and School of Mathematics, Georgia Institute of Technology, 686 Cherry Street, Atlanta, GA 30332, USA}\\
$^{2}$ Department of Mathematics, 308 TMCB, Brigham Young University, Provo, UT 84602, USA\\
E-mail: bunimovih@math.gatech.edu and bwebb@math.byu.edu
\end{center}

\begin{abstract}
It is common knowledge that a key dynamical characteristic of a network is its spectrum (the collection of all eigenvalues of the network's weighted adjacency matrix). In \cite{BW10} we demonstrated that it is possible to reduce a network, considered as a graph, to a smaller network with fewer vertices and edges while preserving the spectrum (or spectral information) of the original network. This procedure allows for the introduction of new equivalence relations between networks, where two networks are spectrally equivalent if they can be reduced to the same network. Additionally, using this theory it is possible to establish whether a network, modeled as a dynamical system, has a globally attracting fixed point (is strongly synchronizing). In this paper we further develop this theory of isospectral network transformations and demonstrate that our procedures are applicable to families of parameterized networks and networks of arbitrary size.
\end{abstract}

\vspace{0.1in}

\textbf{Real networks are often very large with a complicated structure (topology). It is therefore tempting to find ways of reducing (or compressing) them. However, when a network is compressed it is possible for some of the important network properties to be lost.}

\textbf{As most real networks are dynamic a natural goal, when reducing such systems, is to preserve their dynamical characteristics. Arguably, the most important characteristic of a dynamical system is its spectrum. It may seem ``obvious" though, that a reduced (lower-dimensional) network should necessarily have fewer eigenvalues than the initial unreduced system. If such were true then no such procedure could exist.}

\textbf{However, in \cite{BW10} it was shown that this is in fact possible and even easily implemented for the analysis of real networks. Moreover, this procedure allows those working with real networks the freedom to design any criteria for compressing a network that they see fit. For instance, one could remove all vertices (edges) with minimal centrality, in/outer degree, etc. Any general rule that determines a core subnetwork, presumably chosen by an expert (biologist, engineer, etc), is possible.}

\textbf{The present paper further develops this procedure presenting some new results and numerous examples. To make the paper self-contained and accessible to non-mathematicians we present all definitions and results with examples but omit technical proofs if they can be found elsewhere.}

\vspace{0.1in}

\section{introduction}
The study of networks in nature, science, and technology is currently one of the most active areas of research. A good deal of this research however is focused on the structural or static features of these networks \cite{Albert02,Dorogovtsev03,Faloutsos99,Newman06,Porter09,Strogatz03,Watts99} although most real networks are inherently dynamic. That is, each network element has an associated state that changes with time  (e.g. neural networks, metabolic networks, etc.).

Moreover, each such network is characterized by the following features. First, the network elements are themselves dynamical systems that evolve according to their own intrinsic (internal) dynamics when isolated from the other network elements. Second, these network elements interact with one another. This interaction between network elements ensures that these elements do in fact form a network.

The internal dynamics and interactions of a network have already been studied together for quite some time. A popular example are reaction-diffusion equations where the reaction corresponds to the internal dynamics and the diffusion to the interactions of these systems (see \cite{Chazottes05} for instance).

What differentiates the current study of dynamical systems from those that have been done previously is the focus on the graph structure or \emph{topology} of the network. Using this approach one ``freezes" the network dynamics and studies the network's structure of interactions (or graph of interactions). These investigations have lead to numerous discoveries concerning the structure of networks. However, the dynamic characteristics of networks have received far less attention.

To investigate the dynamic properties of networks a general approach was introduced in \cite{Afriamovich07}. The major idea in this work is that a network's dynamics can be analyzed in terms of three key features; (i) the internal (local) dynamics of the network elements, (ii) the interactions between the network elements, and (iii) the topology or structure of the graph of interactions of the network.

Features (i) and (ii) of a network are described by dynamical systems and can therefore be studied using standard methods in the theory of differential equations and dynamical systems (again reaction-diffusion systems serve as a standard example). However, the third feature of a network, its topology, is static and must therefore be studied by other means.

It is worth noting that there is a well developed theory for dynamical networks with a lattice type structure of interactions \cite{Chazottes05}. However, the large majority of real networks do not have a regular structure and therefore require another theory to describe their dynamics.

The approach developed in \cite{Afriamovich07} is based on the idea that the third feature of a dynamical network, its graph structure, can also be described as a dynamical system. This notion has allowed for the development of a fairly general  theory of dynamical networks. However, we note here that these studies do not consider rewirings of dynamical networks. The reason being is that rewiring a network makes its law of evolution nonconstant (i.e. time dependent) in which case the network can no longer be considered as an autonomous dynamical system. Instead, to study rewirings one must consider a dynamical system in a certain space of dynamical networks, which is another problem.

When dealing with real networks one is often confronted with the fact that the dynamics of the individual network elements are unknown. Additionally, the interactions (and interaction strengths) between network elements may also be unknown. Even in such circumstances it is tempting to develop methods to study the dynamics of such systems. But this begs the question: If little about the network is known what can be deduced?

In fact, we do know something. That is, we know that the network is dynamic. Hence, one may try various transforms of the network's topology that preserve important aspects of the network's dynamics. Here we consider one of the most fundamental characteristics of a dynamical system, its spectrum.

In many situations only the network structure is known i.e. which network elements interact and which do not. This structure can therefore be modeled by a graph of these interactions or equivalently by the adjacency matrix of this graph. If the numerical strengths of these interactions are known then we have a weighted graph of interactions and a corresponding weighted adjacency matrix.

Because of the often complicated structure of a network it is tempting to find ways of simplifying this structure while preserving some fundamental property or feature of the network. For instance, it would be nice if we were able to reduce a network onto some subset of its vertices (or edges) while maintaining the spectrum of the network's adjacency matrix. However, reducing a network in this way may seem impossible since larger matrices have a larger spectrum (number of eigenvalues).

In fact, it is possible to do just this. The procedure that allows for such reductions is called an \emph{isospectral graph reduction}. This procedure which was developed in \cite{BW10} was moreover shown to have the following key features:
\begin{enumerate}
 \item \textbf{Easily Implemented}: The procedure of \emph{isospectral graph reduction} is well defined and easily implemented. In particular, it is not necessary to develop sophisticated software to carry out this procedure as it is both simple and straightforward.
 \item \textbf{Flexible}: The procedure is very flexible allowing experimentalists the possibility of reducing a network over any set of network elements (vertices) he or she chooses.
 \item \textbf{Unique Reductions}: After reducing a network it is possible to further reduce the network to an even smaller network. Moreover, the resulting smaller network does not depend on the intermediate choice of vertices but only the final collection (subset) of vertices.
 \item \textbf{New Equivalence Relations}: This procedure introduces new equivalence relations between networks where two networks (graphs) are \emph{spectrally equivalent} if they can be reduced to the same network (graph). This in turn allows for the introduction of new relations between various networks whose topologies look quite different but share similar dynamics.
 \item \textbf{Other Network Transforms}: It is also possible transform a network in a variety of other ways while maintaining the spectrum of the network. This includes isospectral expansions of a network which in turn allow for improved estimates of the network's dynamic stability when compared to other methods.
\end{enumerate}

To make the paper self-contained and accessible to the non-mathematician, who are dealing with real networks, we give a detailed explanation of an \emph{isospectral graph reduction} and related transformations as well as their important features. These key features are illustrated by numerous examples. Proofs of all mathematical statements not found in this paper can be found in \cite{BW10}.

This procedure, although developed as a practical tool for dealing with real networks, has already been shown to be an effective means for obtaining new advances in the classical problem of estimating the eigenvalues of a square matrix \cite{BW09}. Specifically, the eigenvalues estimates associated with Gershgorin and others \cite{Gershgorin31,Varga09} improve as the graph associated with the matrix is reduced.

In the present paper we strengthen these previous results by applying isospectral network transformations to networks of arbitrary size and to families parameter dependent networks. Besides this we use a powerful new concept of a complete structural set to strengthen and clarify some previously obtained results.

Lastly, we note that although our procedure deals with matrices as is therefore linear in nature, it in fact does not have this restriction and is designed for the analysis to nonlinear dynamical systems.

\section{Network as Graphs}

To each network there is an associated weighted directed graph $G=(V,E,\omega)$ called the network's \emph{graph of interactions}. The \emph{vertex set} $V$ represents the elements of the network and the \emph{edge set} $E$ the interaction between these elements. For $V=\{v_1,\dots,v_n\}$ we let $e_{ij}$ denote the edge from vertex $v_i$ to $v_j$. The edge $e_{ij}$ is an element of $E$ if the $i$th network element interacts with (or influences) the $j$th network element. The function $\omega$ gives the edge weights of $G$ where $\omega(e_{ij})$, or the edge weight of $e_{ij}$, corresponds to the strength of the interaction between the $i$th and $j$th elements of a network. We adopt the standard convention that each edge of the weighted graph $G$ has a nonzero weight.

\subsection{Isospectral Graph Reductions}
In this section we formally describe the isospectral reduction process of a graph where each graph is considered to be the graph of some dynamical network. Specifically, we consider those graphs that are weighted, directed, with edge weights in the set $\mathbb{W}[\lambda]$ (defined below). Such graphs form the class $\mathbb{G}$.

\begin{remark}
The class of graphs $\mathbb{G}$ is very general as it contains, for instance, the class of undirected graphs with numerical weighs.
\end{remark}

Let $\mathbb{C}[\lambda]$ be the set of polynomials in the complex variable $\lambda$ with complex coefficients. We denote by $\mathbb{W}[\lambda]$ the set of rational functions of the form $p/q$ where $p,q\in\mathbb{C}[\lambda]$, and $q\neq 0$. As we will be dealing with the eigenvalues of matrices which are sets that include multiplicities, we mention the following.

The element $\alpha$ of the set $A$ that includes multiplicities has \textit{multiplicity} $m$ if there are $m$ elements of $A$ equal to $\alpha$. If $\alpha\in A$ with multiplicity $m$ and $\alpha\in B$ with multiplicity $n$ then\\
(i) the \textit{union} $A\cup B$ is the set in which $\alpha$ has multiplicity $m+n$; and\\
(ii) the \textit{difference} $A-B$ is the set in which $\alpha$ has multiplicity $m-n$ if $m-n>0$ and where $\alpha\notin A-B$ otherwise.

\begin{definition}\label{def1.1}
Let $\mathbb{W}[\lambda]^{n\times n}$ denote the set of $n\times n$ matrices with entries in $\mathbb{W}[\lambda]$. For a matrix $A\in\mathbb{W}[\lambda]^{n\times n}$ suppose that $\det(A-\lambda I)=p(\lambda)/q(\lambda)$ where $p(\lambda),q(\lambda)\in\mathbb{C}[\lambda]$. Define the sets
$$P=\{\lambda\in\mathbb{C}:p(\lambda)=0\} \ \ \text{and} \ \ Q=\{\lambda\in\mathbb{C}:q(\lambda)=0\}$$
where these sets includes multiplicities. We call the sets $$\sigma\big(A\big)=P-Q \ \text{and} \ \sigma^{-1}\big(A\big)=Q-P$$ the \textit{spectrum} (or set of \textit{eigenvalues}) of $A$ and the \textit{inverse spectrum} of $A$ respectively.
\end{definition}

We note that both the \textit{spectrum} of $A$ and the \textit{inverse spectrum} of $A$ are sets that include multiplicities where the multiplicity of each element of $\sigma(A)$ and $\sigma^{-1}(A)$ depend on $P$ and $Q$ according to $(ii)$.

It is worth mentioning that the representation of $p(\lambda)/q(\lambda)\in\mathbb{W}[\lambda]$ is not unique. That is, $p(\lambda)/q(\lambda)$ is equivalent to $r(\lambda)/s(\lambda)$ for $p(\lambda),q(\lambda),r(\lambda),s(\lambda)\in\mathbb{C}[\lambda]$ if $p(\lambda)s(\lambda)=q(\lambda)r(\lambda)$. However, it can be shown that the spectrum and inverse spectrum of a matrix $A\in\mathbb{W}[\lambda]^{n\times n}$ are well defined, i.e. do not depend on the particular representation of $\det(A-\lambda I)$.

Suppose $G=(V,E,\omega)$ with vertex set $V=\{v_1,\dots,v_n\}$. We define the matrix $M(G)\in\mathbb{W}[\lambda]^{n\times n}$ entrywise by $$M(G)_{ij}=\omega(e_{ij}).$$
The matrix $M(G)$ is called the \textit{weighted adjacency matrix} of $G$. For the graph $G$ we denote by $\sigma(G)$ and $\sigma^{-1}(G)$ the \emph{spectrum} of and \emph{inverse spectrum} of $M(G)$ respectively.

The study of digraph spectra has and continues to be an extremely active area of research \cite{Bru2010}. However, the approach developed in \cite{BW10}, and described here, differs from previous studies in that we consider graphs with weights in $\mathbb{W}[\lambda]$.

The reason we consider graphs with weights in $\mathbb{W}[\lambda]$ (or matrices with entries in $\mathbb{W}[\lambda]$) is that we wish to reduce the size of the graph (matrix) while maintaining its spectrum. We note that this is not possible if we restrict ourselves to matrices with numerical weights (entries) since a matrix $A\in\mathbb{C}^{n\times n}$ has exactly $n$ eigenvalues including multiplicities. However, a matrix $A\in\mathbb{W}[\lambda]^{n\times n}$ may have more than $n$ eigenvalues in its spectrum. This is demonstrated in example 2.

A \textit{path} $P$ in  the graph $G=(V,E,\omega)$ is an ordered sequence of distinct vertices $v_1,\dots,v_m\in V$ such that $e_{i,i+1}\in E$ for $1\leq i\leq m-1$. We call the vertices $v_2,\dots,v_{m-1}$ of $P$ the \textit{interior} vertices of $P$. If the vertices $v_1$ and $v_m$ are the same then $P$ is a \textit{cycle}. A cycle $v_1\dots,v_m$ is called a \textit{loop} if $m=1$. Note that as $v_i,v_i$ is a loop of $G$ if and only if $e_{ii}\in E$ we may refer to the edge $e_{ii}$ as the loop. If $S\subseteq V$ where $V$ is the vertex set of a graph we will write $\bar{S}=V-S$.

The main idea behind an isospectral reduction of a graph $G=(V,E,\omega)$ is that we reduce $G$ to a smaller graph on some subset $S\subset V$. The sets $S$ for which this is possible are defined as follows.

\begin{definition}\label{def1}
Let $G=(V,E,\omega)$. A nonempty vertex set $S\subseteq V$ is a \textit{structural set} of $G$ if\\
(i) each cycle of $G$, that is not a loop, contains a vertex in $S$; and\\
(ii) $\omega(e_{ii})\neq \lambda$ for each $v_i\in \bar{S}$.
\end{definition}

Notice that part (i) of definition \ref{def1} states that a structural set $S$ of $G$ depends intrinsically on the structure of $G$. Part (ii), on the other hand, is the formal assumption that the loops of the vertices in $\bar{S}$, i.e. the complement of $S$, do not have weight equal to $\lambda\in\mathbb{W}[\lambda]$. That is, we mean such loops are not weighted by the rational function $\lambda/1\in\mathbb{W}[\lambda]$.

For $G\in\mathbb{G}$ we let $st(G)$ denote the set of all structural sets of the graph $G$.

\begin{definition}
Suppose $G=(V,E,\omega)$ with structural set $S=\{v_1,\dots,v_m\}$. Let $\mathcal{B}_{ij}(G;S)$ be the set of paths or cycles from $v_i$ to $v_j$ with no interior vertices in $S$. We call a path or cycle $\beta\in\mathcal{B}_{ij}(G;S)$ a \textit{branch} of $G$ with respect to $S$. We let
$$\mathcal{B}_S(G)=\bigcup_{1\leq i,j \leq m} \mathcal{B}_{ij}(G;S)$$
denote the set of all branches of $G$ with respect to $S$.
\end{definition}

If $\beta=v_1,\dots,v_m$ is a branch of $G$ with respect to $S$ and $m>2$ define
\begin{equation}\label{eq0.9}
\mathcal{P}_{\omega}(\beta)=\omega(e_{12})\prod_{i=2}^{m-1}\frac{\omega(e_{i,i+1})}{\lambda-\omega(e_{ii})}.
\end{equation}
For $m=1,2$ let $\mathcal{P}_{\omega}(\beta)=\omega(e_{1m})$. We call $\mathcal{P}_{\omega}(\beta)$ the \textit{branch product} of $\beta$. Note that assumption (ii) in definition \ref{def1} implies that the branch product of any $\beta\in\mathcal{B}_S(G)$ is always defined and is a rational function in $\mathbb{W}[\lambda]$.

In our procedure, which we term an \textit{isospectral graph reduction}, we replace the branches $\mathcal{B}_{ij}(G;S)$ of a graph with a single edge. The following definition specifies the weights of these edges.

\begin{definition}\label{IR}
Let $G=(V,E,\omega)$ with structural set $S=\{v_1\,\dots,v_m\}$. Define the edge weights
\begin{equation}\label{eq1.0}
\mu(e_{ij})=\begin{cases}
\displaystyle{\sum_{\beta\in\mathcal{B}_{ij}(G;S)}\mathcal{P}_\omega(\beta)} & \text{if} \ \ \ \mathcal{B}_{ij}(G;S)\neq\emptyset\\
\ \ \ \ \ 0 & \text{otherwise}
            \end{cases} \ \ \ \text{for} \ \ \ 1\leq i,j\leq m.
\end{equation}
The graph $\mathcal{R}_S(G)=(S,\mathcal{E},\mu)$ where $e_{ij}\in \mathcal{E}$ if $\mu(e_{ij})\neq 0$
is the \textit{isospectral reduction} of $G$ over $S$.
\end{definition}

Observe that $\mu(e_{ij})$ in definition \ref{IR} is the weight of the edge $e_{ij}$ in $\mathcal{R}_S(G)$. Moreover, as $W[\lambda]$ is closed under both addition and multiplication then the edge weights $\mu(e_{ij})$ of $\mathcal{R}_S(G)$ are also in the set $W[\lambda]$. Hence, the isospectral reduction $\mathcal{R}_S(G)$ is again a graph in $\mathbb{G}$.

\begin{example}\label{ex3}
Consider the graph $G=(V,E,\omega)$ given in figure \ref{fig2} (left) where each edge of $G$ is given unit weight. Note that the vertex set $S=\{v_1,v_3\}\subset V$ is a structural set of $G$ since\\
(i) the three nonloop cycles of $G$, namely $v_1,v_2,v_3,v_4,v_1$; $v_1,v_5,v_1$; and $v_3,v_6,v_3$ each contain a vertex in $S$; and\\
(ii) the loop weights of vertices in $\bar{S}=\{v_2,v_4,v_5,v_6\}$ are $\omega(e_{22})=1$, $\omega(e_{44})=1,$ $\omega(e_{55})=1$, and $\omega(e_{66})=1$ respectively. Hence, $\omega(e_{ii})=1\in\mathbb{W}[\lambda]$ is not equal to the rational function $\lambda/1\in\mathbb{W}[\lambda]$ for each $v_i\in\bar{S}$.

In contrast, the vertex set $T=\{v_1,v_5\}$ is not a structural set of $G$ as the (nonloop) cycle $v_3,v_6,v_3$ does not contain a vertex of $T$.

The branches in $\mathcal{B}_S(G)$ are respectively $\mathcal{B}_{11}(G;S)=\{v_1,v_5,v_1\}$, $\mathcal{B}_{13}(G;S)=\{v_1,v_2,v_3\}$, $\mathcal{B}_{31}(G;S)=\{v_3,v_4,v_1\}$, and $\mathcal{B}_{33}(G;S)=\{v_3,v_6,v_3\}$. Using equation (\ref{eq0.9}) the branch product of each branch is given by
$$\mathcal{P}_\omega(v_1,v_5,v_1)=\mathcal{P}_\omega(v_1,v_2,v_3)=\mathcal{P}_\omega(v_3,v_4,v_1)= \mathcal{P}_\omega(v_3,v_6,v_3)=\frac{1}{\lambda-1}.$$

Using equation (\ref{eq1.0}) each edge of $\mathcal{R}_S(G)=(S,\mathcal{E},\mu)$ has weight given by
$$\mu(e_{11})=\mu(e_{13})=\mu(e_{31})=\mu(e_{33})=\frac{1}{\lambda-1}.$$
As each edge weight is nonzero the edge set $\mathcal{E}$ of $\mathcal{R}_S(G)$ is $\mathcal{E}=\{e_{11}$, $e_{13}$, $e_{31}$, $e_{33}\}$. The graph $\mathcal{R}_S(G)$ is shown in figure \ref{fig2} (right).
\end{example}

\begin{figure}
  \begin{center}
    \begin{overpic}[scale=.5]{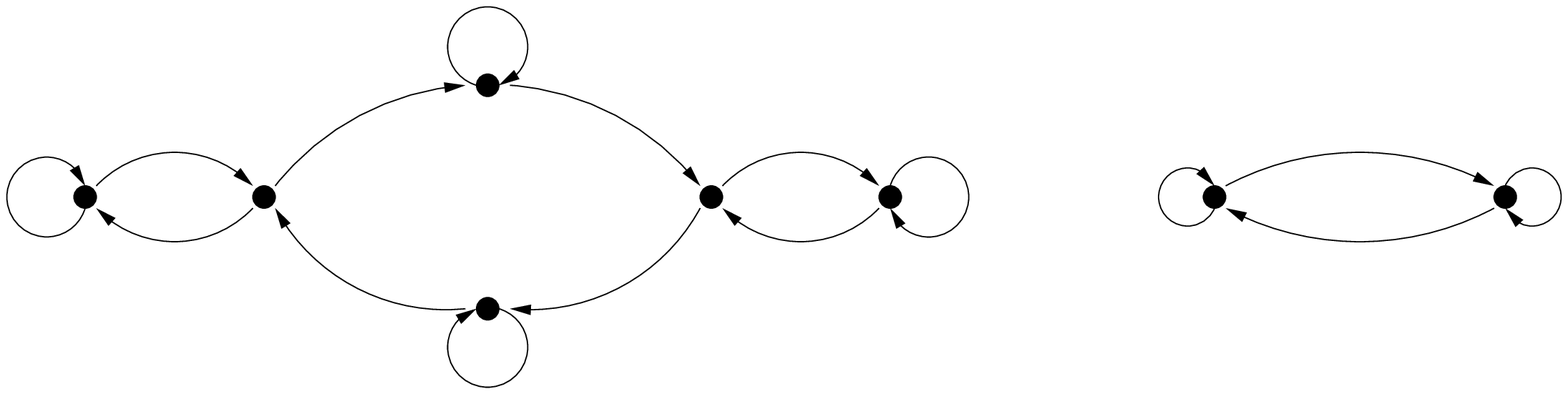}
    \put(29.5,-2.5){$G$}
    \put(15,15.5){$v_1$}
    \put(76.5,15.5){$v_1$}
    \put(29.5,17.25){$v_2$}
    \put(29.5,7.5){$v_4$}
    \put(4.5,15.5){$v_5$}
    \put(44,15.5){$v_3$}
    \put(94.5,15.5){$v_3$}
    \put(54.5,15.5){$v_6$}
    \put(84,18){{$\frac{1}{\lambda-1}$}}
    \put(84,6.5){$\frac{1}{\lambda-1}$}
    \put(68,12){$\frac{1}{\lambda-1}$}
    \put(100,12){$\frac{1}{\lambda-1}$}
    \put(82,-2.5){$\mathcal{R}_S(G)$}
    \end{overpic}
  \end{center}
  \caption{Reduction of $G$ over $S=\{v_1,v_3\}$ where each edge in $G$ has unit weight.}\label{fig2}
\end{figure}

Recall that if $S$ is a structural set of the graph $G\in\mathbb{G}$ then the isospectral reduction $\mathcal{R}_S(G)$ is also a graph in $\mathbb{G}$. Hence, both $G$ and $\mathcal{R}_S(G)$ have well-defined spectra. The relation between the spectrum $\sigma(G)$ and $\sigma(\mathcal{R}_S(G))$ is given in the following fundamental theorem.

\begin{theorem}\label{maintheorem}
Let $S$ be a structural set of the graph $G\in\mathbb{G}$. Then $$\sigma\big(\mathcal{R}_S(G)\big)=\big(\sigma(G)\cup\sigma^{-1}(G|_{\bar{S}})\big)- \big(\sigma(G|_{\bar{S}})\cup\sigma^{-1}(G)\big).$$
\end{theorem}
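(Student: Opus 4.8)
The plan is to express $\det(M(G)-\lambda I)$ by a Schur complement taken with respect to the partition $V=S\cup\bar S$, to recognize the resulting Schur complement as $M(\mathcal{R}_S(G))-\lambda I$, and then to read off the spectral identity by counting root multiplicities in the factored characteristic rational functions.

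First I would write $M(G)-\lambda I$ in the block form
$$\begin{pmatrix} M_{SS}-\lambda I & M_{S\bar S}\\ M_{\bar S S} & M_{\bar S\bar S}-\lambda I\end{pmatrix},$$
where $M_{\bar S\bar S}=M(G|_{\bar S})$. Condition (i) of Definition \ref{def1} forces $G|_{\bar S}$ to have no cycles other than loops, so a topological ordering of $\bar S$ makes $M_{\bar S\bar S}$ triangular and gives $\det(M_{\bar S\bar S}-\lambda I)=\prod_{v_i\in\bar S}(\omega(e_{ii})-\lambda)$; condition (ii) makes each factor a nonzero element, hence $M_{\bar S\bar S}-\lambda I$ is invertible over $\mathbb{W}[\lambda]$ (which is a field, being the fraction field of $\mathbb{C}[\lambda]$). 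The Schur complement formula then yields
$$\det(M(G)-\lambda I)=\det(M_{\bar S\bar S}-\lambda I)\cdot\det\big((M_{SS}-\lambda I)-M_{S\bar S}(M_{\bar S\bar S}-\lambda I)^{-1}M_{\bar S S}\big).$$

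The central step, which I expect to be the main obstacle, is identifying the Schur complement with $M(\mathcal{R}_S(G))-\lambda I$. Writing $M_{\bar S\bar S}=D+N$ with $D$ the diagonal of loop weights and $N$ nilpotent (strictly triangular in the topological ordering), the factorization $\lambda I-M_{\bar S\bar S}=(\lambda I-D)\big(I-(\lambda I-D)^{-1}N\big)$ produces the finite Neumann series
$$(\lambda I-M_{\bar S\bar S})^{-1}=\Big(\sum_{k\ge 0}\big((\lambda I-D)^{-1}N\big)^{k}\Big)(\lambda I-D)^{-1}.$$
Expanded entrywise, the $(u,u')$ entry is a sum over directed paths $u\to u'$ inside $\bar S$ of products of edge weights $\omega(e)$ and loop factors $1/(\lambda-\omega(e_{ww}))$; pre- and post-multiplying by $M_{S\bar S}$ and $M_{\bar S S}$ attaches the initial and terminal edges that land in $S$. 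Careful bookkeeping of these factors should show that the $(i,j)$ entry of $M_{S\bar S}(\lambda I-M_{\bar S\bar S})^{-1}M_{\bar S S}$ is exactly $\sum_{\beta}\mathcal{P}_\omega(\beta)$ over branches in $\mathcal{B}_{ij}(G;S)$ having at least one interior vertex, while $M_{SS}$ supplies the single-edge branches; together these reproduce $\mu(e_{ij})$ of Definition \ref{IR}. Matching the denominators $\lambda-\omega(e_{ww})$ of the branch product against the diagonal factors of $(\lambda I-D)^{-1}$, and tracking the sign coming from $(M_{\bar S\bar S}-\lambda I)^{-1}=-(\lambda I-M_{\bar S\bar S})^{-1}$, is the delicate part of the verification.

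Having established $\det(M(G)-\lambda I)=\det(M(G|_{\bar S})-\lambda I)\cdot\det(M(\mathcal{R}_S(G))-\lambda I)$, I would finish with a multiplicity count. Writing $\det(M(G)-\lambda I)=p_G/q_G$ and $\det(M(G|_{\bar S})-\lambda I)=p_{\bar S}/q_{\bar S}$, the factorization gives the representation $\det(M(\mathcal{R}_S(G))-\lambda I)=(p_G\,q_{\bar S})/(q_G\,p_{\bar S})$, so by Definition \ref{def1.1} the spectrum of $\mathcal{R}_S(G)$ is the multiset difference $(P_G\cup Q_{\bar S})-(Q_G\cup P_{\bar S})$. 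For a fixed $\alpha$ with multiplicities $a,b,c,d$ in $P_G,Q_G,P_{\bar S},Q_{\bar S}$, the left side has multiplicity $\max(0,(a+d)-(b+c))$; using $\max(0,x)-\max(0,-x)=x$ one checks that $\big(\sigma(G)\cup\sigma^{-1}(G|_{\bar S})\big)-\big(\sigma(G|_{\bar S})\cup\sigma^{-1}(G)\big)$ carries the same multiplicity at $\alpha$, which proves the stated identity.
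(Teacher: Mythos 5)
First, a point that matters for this comparison: the paper never proves Theorem \ref{maintheorem} --- the authors explicitly defer all omitted proofs to \cite{BW10} --- so your argument can only be measured against the cited source, not against an in-text proof. Measured on its own merits, your proposal is correct and complete. The two steps you flag as delicate do go through: by condition (i) of Definition \ref{def1} every cycle of $G|_{\bar{S}}$ is a loop, so a topological order makes $N$ strictly triangular, hence $(\lambda I-D)^{-1}N$ is nilpotent and the Neumann series terminates; and because the non-loop edges of $G|_{\bar{S}}$ strictly increase in that order, every walk appearing in the expansion automatically has distinct vertices, which is exactly what puts the terms of $M_{S\bar{S}}(\lambda I-M_{\bar{S}\bar{S}})^{-1}M_{\bar{S}S}$ in bijection with the branches of $\mathcal{B}_{ij}(G;S)$ having interior vertices, each term reproducing the branch product of equation (\ref{eq0.9}); the block $M_{SS}$ supplies the one- and two-vertex branches, and the sign is right since $-(M_{\bar{S}\bar{S}}-\lambda I)^{-1}=(\lambda I-M_{\bar{S}\bar{S}})^{-1}$. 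Invertibility of the $\bar{S}$-block over the field $\mathbb{C}(\lambda)$ is precisely condition (ii) combined with equation (\ref{eq0.2}). Your closing multiplicity count is also sound: writing $x=a-b$ and $y=d-c$, both sides carry multiplicity $\max(0,x+y)$ at $\alpha$, and your use of the unreduced representation $(p_G q_{\bar{S}})/(q_G p_{\bar{S}})$ is licensed by the representation-independence of Definition \ref{def1.1}, which the paper records explicitly. As a sanity check, the paper's Example 2 corroborates your factorization: $(\lambda-2)(\lambda+1)(\lambda-1)^3\lambda$ divided by $(\lambda-1)^4$ is exactly the displayed $(\lambda^3-\lambda^2-2\lambda)/(\lambda-1)$.

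As for the route: the heart of the matter in \cite{BW10} is the same pivotal factorization $\det(M(G)-\lambda I)=\det(M(G|_{\bar{S}})-\lambda I)\cdot\det(M(\mathcal{R}_S(G))-\lambda I)$, but it is reached there by more combinatorial, hands-on means (determinant expansions organized around the graph's cycle structure and the branch decomposition, with sequential vertex removal) rather than by a single Schur-complement step over $\mathbb{C}(\lambda)$. Your version buys brevity and relies only on standard linear algebra over a field, and it makes transparent \emph{why} the reduced matrix is what it is: $M(\mathcal{R}_S(G))$ is exactly the generalized Schur complement $M_{SS}+M_{S\bar{S}}(\lambda I-M_{\bar{S}\bar{S}})^{-1}M_{\bar{S}S}$. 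The combinatorial version, on the other hand, exhibits the term-by-term correspondence between permutation products and branches without ever inverting a matrix. Both end with the same multiset bookkeeping on $P$ and $Q$.
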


Since the cycles of $G|_{\bar{S}}$ are loops it follows that
\begin{equation}\label{eq0.2}
 \det\big(M(G|_{\bar{S}})-\lambda I\big)=\prod_{v_i\in \bar{S}}\big(\omega(e_{ii})-\lambda\big).
\end{equation}
In light of equation (\ref{eq0.2}), theorem \ref{maintheorem} has the following corollary.

\begin{corollary}\label{cor1}
Let $S$ be a structural set of the graph $G\in\mathbb{G}$. If $M(G)\in\mathbb{C}^{n\times n}$ then\\
(i) $\sigma(G|_{\bar{S}})=\{\omega(e_{ii}): v_i\in \bar{S}\}$;\\
(ii) $\sigma^{-1}(G|_{\bar{S}})=\emptyset$; and\\
(iii) $\sigma(\mathcal{R}_S(G))=\sigma(G)-\sigma(G|_{\bar{S}})$.
\end{corollary}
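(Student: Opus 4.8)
The plan is to treat this as a direct specialization of Theorem \ref{maintheorem} together with equation (\ref{eq0.2}), using the single structural observation that constant edge weights force the ``denominator'' polynomials in all the relevant characteristic rational functions to be trivial, so every inverse spectrum appearing in Theorem \ref{maintheorem} vanishes. I would first record this observation precisely and then read off (i), (ii), and (iii) in turn.

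First I would establish (i) and (ii) directly from equation (\ref{eq0.2}). Since $M(G)\in\mathbb{C}^{n\times n}$, every weight $\omega(e_{ii})$ with $v_i\in\bar{S}$ is a complex constant, so the right-hand side of (\ref{eq0.2}) is an honest polynomial. Thus $\det(M(G|_{\bar{S}})-\lambda I)=p(\lambda)/q(\lambda)$ with $p(\lambda)=\prod_{v_i\in\bar{S}}(\omega(e_{ii})-\lambda)$ and $q(\lambda)=1$. Applying Definition \ref{def1.1}, the set $Q=\{\lambda:q(\lambda)=0\}$ is empty while $P=\{\lambda:p(\lambda)=0\}$ is exactly the multiset $\{\omega(e_{ii}):v_i\in\bar{S}\}$. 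Hence $\sigma(G|_{\bar{S}})=P-Q=P$, which is (i), and $\sigma^{-1}(G|_{\bar{S}})=Q-P=\emptyset$, which is (ii). Here I would pay attention to the multiplicity conventions stated after Definition \ref{def1.1}: because $Q=\emptyset$ no cancellation occurs in either difference, so the multiset of roots of $p$ passes unchanged into $\sigma(G|_{\bar{S}})$.

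Next, for (iii) I would invoke Theorem \ref{maintheorem}, which gives $\sigma(\mathcal{R}_S(G))=(\sigma(G)\cup\sigma^{-1}(G|_{\bar{S}}))-(\sigma(G|_{\bar{S}})\cup\sigma^{-1}(G))$. Two of the four terms are already known to be empty: part (ii) gives $\sigma^{-1}(G|_{\bar{S}})=\emptyset$, and the same argument as above applied to the full matrix shows $\sigma^{-1}(G)=\emptyset$, since $M(G)\in\mathbb{C}^{n\times n}$ makes $\det(M(G)-\lambda I)$ the ordinary degree-$n$ characteristic polynomial with trivial denominator. Substituting these into Theorem \ref{maintheorem} collapses the expression to $\sigma(\mathcal{R}_S(G))=\sigma(G)-\sigma(G|_{\bar{S}})$, which is (iii).

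I do not anticipate a genuine obstacle here, as the statement is essentially a reading of Theorem \ref{maintheorem} under a hypothesis that switches off all the inverse-spectrum corrections. The only point demanding care is the bookkeeping of multiplicities in the operations $P-Q$, $\cup$, and $-$: one must confirm that setting $q(\lambda)=1$ is legitimate (i.e. that the characteristic determinant of a constant matrix really has no poles) and that, with the subtrahends empty, the multiset differences in both Definition \ref{def1.1} and Theorem \ref{maintheorem} reduce to the naive set-theoretic statements claimed. Granting equation (\ref{eq0.2}) and Theorem \ref{maintheorem}, these checks are immediate.
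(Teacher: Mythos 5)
Your proposal is correct and follows exactly the route the paper intends: equation (\ref{eq0.2}) shows $\det(M(G|_{\bar{S}})-\lambda I)$ is an honest polynomial (denominator $1$), giving (i) and (ii), and since $M(G)\in\mathbb{C}^{n\times n}$ also forces $\sigma^{-1}(G)=\emptyset$, the formula of Theorem \ref{maintheorem} collapses to (iii). Your added care about the multiset conventions (that differences against empty sets are trivial) is exactly the right bookkeeping and matches the paper's implicit argument.
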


In many applications the graphs (matrices) that are used have real or positive weights (entries). If $G=(V,E,\omega)$ has complex valued weights and $S\in st(G)$ then corollary \ref{cor1} states that the spectrum of $\mathcal{R}_S(G)$ and $G$ differ at most by the spectrum of $G|_{\bar{S}}$. Moreover, the spectrum $\sigma(G|_{\bar{S}})$ are the weights of the loops $e_{ii}$ for $v_i\in\bar{S}$.

We note that theorem \ref{maintheorem} describes exactly which eigenvalues we may gain from an isospectral reduction and which me may lose. In this way an isospectral reduction of a graph preserves the spectral information of the original graph. This will be important in section 4 where we consider isospectral reductions that do not effect the nonzero eigenvalues of a graph.

\begin{example}
Let $G$ be the graph considered in example \ref{ex3}. As previously shown the vertex set $S=\{v_1,v_3\}$ is a structural set of $G$. Moreover, $M(G)\in\mathbb{C}^{6\times 6}$. Hence, corollary \ref{cor1} allows us to quickly compute the eigenvalues of the reduced graph $\mathcal{R}_S(G)$ once the eigenvalues of $G$ are known.

As one can calculate, the eigenvalues of the graph $G$ are $\sigma(G)=\{2,-1,1,1,1,0\}$. The restricted graph $G|_{\bar{S}}$ shown in figure \ref{fig3} has loop weights $\omega(e_{22})=1$, $\omega(e_{44})=1$, $\omega(e_{55})=1$, and $\omega(e_{66})=1$. Corollary \ref{cor1} therefore implies that $\sigma(G|_{\bar{S}})=\{1,1,1,1\}$ (and $\sigma^{-1}(G|_{\bar{S}})=\emptyset$). As $\sigma(\mathcal{R}_S(G))=\sigma(G)-\sigma(G|_{\bar{S}})$ then the spectrum of the reduced graph is $\sigma\big(\mathcal{R}_S(G)\big)=\{2,-1,0\}$.

Since the graph $\mathcal{R}_S(G)$ has two vertices the matrix $M(\mathcal{R}_S(G))\in\mathbb{W}[\lambda]^{2\times 2}$. However, notice that
$$\det\big(M(\mathcal{R}_S(G))-\lambda I\big)=\frac{\lambda^3-\lambda^2-2\lambda}{\lambda-1}$$
which is zero for $\lambda=2,-1,0$. This is an explicit demonstration of the fact that an $n\times n$ matrix in $\mathbb{W}[\lambda]^{n\times n}$ may have more than $n$ eigenvalues.

Therefore, the effect of reducing $G$ over $S$ is that we lose the eigenvalues $\{1,1,1\}$. However, even if $\sigma(G)$ is unknown we still know the following. The set of eigenvalues $\sigma(G|_{\bar{S}})=\{1,1,1,1\}$ is the most by which $\sigma(\mathcal{R}_S(G))$ and $\sigma(G)$ can differ.
\end{example}

\begin{figure}
  \begin{center}
    \begin{overpic}[scale=.45]{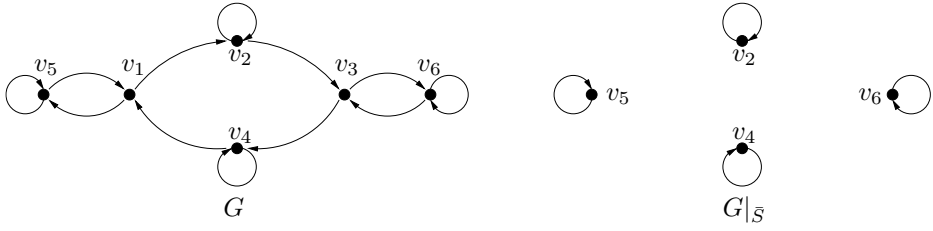}
    \put(22.5,-2.5){$G$}
    \put(12,12.5){$v_1$}
    \put(23,13.5){$v_2$}
    \put(23,5.5){$v_4$}
    \put(3,12.5){$v_5$}
    \put(34,12.5){$v_3$}
    \put(42.5,12.5){$v_6$}

    \put(62,9.5){$v_5$}
    \put(75,13.5){$v_2$}
    \put(75,5.5){$v_4$}
    \put(88,9.5){$v_6$}
    \put(74,-2.5){$G|_{\bar{S}}$}
    \end{overpic}
  \end{center}
  \caption{Restriction of the graph $G$ to $\bar{S}=\{v_2,v_4,v_5,v_6\}$ where each edge in $G$ and $G|_{\bar{S}}$ has unit weight.}\label{fig3}
\end{figure}

We note that both $\sigma(G|_{\bar{S}})$ and $\sigma^{-1}(G|_{\bar{S}})$ are easily calculated via equation (\ref{eq0.2}). Therefore, theorem \ref{maintheorem} offers a quick way of computing the eigenvalues of a reduced graph if the spectrum of the original unreduced graph is known.

\subsection{Sequential Reductions}
In section 2.1 we observed that any reduction $\mathcal{R}_S(G)$ of a graph $G\in\mathbb{G}$ is again a graph in $\mathbb{G}$. It is therefore possible to consider sequential reductions of a graph $G\in\mathbb{G}$. However, this requires that we first extend our notation to sequences of isospectral reductions.

For $G=(V,E,\omega)$ suppose $S_m\subseteq S_{m-1}\subseteq\dots\subseteq S_1\subseteq V$ such that $S_1\in st(G)$, $\mathcal{R}_1(G)=\mathcal{R}_{S_1}(G)$ and $$S_{i+1}\in st(\mathcal{R}_i(G)) \ \text{where} \ \mathcal{R}_{S_{i+1}}(\mathcal{R}_i(G))=\mathcal{R}_{i+1}(G), \  1\leq i\leq m-1.$$ If this is the case we say $S_1,\dots,S_m$ \textit{induces a sequence of reductions} on $G$ with \textit{final vertex set} $S_m$. By way of notation we write $\mathcal{R}_m(G)=\mathcal{R}(G;S_1,\dots,S_m)$ where $\mathcal{R}(G;S_1,\dots,S_m)$ denotes the graph $G$ reduced over the vertex set $S_1$ then $S_2$ and so on until $G$ is reduced over the final vertex set $S_m$.

If $S\notin st(G)$ it is natural to ask whether there exists a sequence of vertex sets satisfying $S\subseteq S_{m-1}\subseteq\dots\subseteq S_{1}\subseteq V$ that induce a sequence of reductions on $G$ and if so is this the only such sequence? To answer these questions we note the following.

If the weight $\omega(e_{ii})\neq\lambda$ for some $v_i\in V$ then the vertex set $S=V-\{v_i\}$ is a structural set of $G$. This follows from the fact that $\bar{S}=\{v_i\}$. Hence, the graph $G|_{\bar{S}}$ is the graph restricted to the single vertex $v_i$. In particular, this implies that any cycle of $G|_{\bar{S}}$ is a loop.

Therefore, any graph $G\in\mathbb{G}$ can be reduced over the structural set $S=V-\{v_i\}$ if it is known that $\omega(e_{ii})\neq\lambda$. Another way to state this is that it is possible to remove the vertex $v_i$ from $G$ via an isospectral reduction if $\omega(e_{ii})\neq\lambda$ without knowing anything about the graph structure of $G$. This has the following important implication.

Suppose it is known that no loop of $G$ or any loop of any sequential reduction of $G$ has weight $\lambda$. If this is the case then it is possible to remove any sequence of single vertices from $G$ via a sequence of isospectral reductions. Therefore, $G$ can be sequentially reduced to a graph on any subset of its vertex set. This idea is the motivation behind the following.

For any polynomial $p\in\mathbb{C}[\lambda]$ let $deg(p)$ denote the degree of $p$. If $w=p/q\in\mathbb{W}[\lambda]$ where $p,q\in\mathbb{C}[\lambda]$ let $$\pi(w)=deg(p)-deg(q).$$

Let $\mathbb{W}_\pi[\lambda]$ be the subset of $\mathbb{W}[\lambda]$ given by
$$\mathbb{W}_\pi[\lambda]=\big\{w\in\mathbb{W}[\lambda]:\pi(w)\leq 0\big\}.$$
That is, $\mathbb{W}_\pi[\lambda]$ is the set of rational functions in which the degree of the numerator is less than or equal to the degree of the denominator. Let $\mathbb{G}_\pi$ be the graphs in $\mathbb{G}$ with edge weights in the set $\mathbb{W}_\pi[\lambda]$.

\begin{lemma}\label{lemma1}
If $G\in\mathbb{G}_\pi$ and $S\in st(G)$ then $\mathcal{R}_S(G)\in\mathbb{G}_\pi$. In particular, no loop of $G$ and no loop of any reduction of $G$ can have weight $\lambda$.
\end{lemma}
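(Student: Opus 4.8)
The plan is to reduce the entire statement to the behavior of the degree-gap functional $\pi$ under the arithmetic of $\mathbb{W}[\lambda]$. First I would record two elementary properties. For $w_1=p_1/q_1$ and $w_2=p_2/q_2$ in $\mathbb{W}[\lambda]$, multiplicativity of polynomial degree gives
$$\pi(w_1 w_2)=\pi(w_1)+\pi(w_2), \qquad \pi(w_1/w_2)=\pi(w_1)-\pi(w_2),$$
while writing a sum over a common denominator gives the subadditivity bound
$$\pi(w_1+w_2)\leq \max\{\pi(w_1),\pi(w_2)\}.$$
(I would also note in passing that $\pi$ is well defined on $\mathbb{W}[\lambda]$: if $p/q=r/s$ then $ps=qr$, whence $deg(p)-deg(q)=deg(r)-deg(s)$.) In particular $\mathbb{W}_\pi[\lambda]$ is closed under both addition and multiplication, which is the algebraic backbone of the argument.

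The key step is the computation of $\pi(\lambda-w)$ for $w\in\mathbb{W}_\pi[\lambda]$. Writing $w=p/q$ with $deg(p)\leq deg(q)$, we have $\lambda-w=(\lambda q-p)/q$. Since $deg(\lambda q)=deg(q)+1 > deg(q)\geq deg(p)$, the leading term of the numerator comes from $\lambda q$ with no cancellation against $p$, so $deg(\lambda q-p)=deg(q)+1$ and hence $\pi(\lambda-w)=1$. This is exactly the place where the hypothesis $G\in\mathbb{G}_\pi$ is used essentially, and it is the only delicate point: if $w$ had $\pi(w)>0$ the term $\lambda q$ need not dominate and the conclusion could fail. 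I would then apply this to each branch product. For a branch $\beta=v_1,\dots,v_m$ with $m>2$, the factors $\omega(e_{12})$ and $\omega(e_{i,i+1})$ are edge weights of $G$, hence lie in $\mathbb{W}_\pi[\lambda]$, while each interior factor $\lambda-\omega(e_{ii})$ contributes $\pi=1$ to a denominator. Using multiplicativity on (\ref{eq0.9}),
$$\pi(\mathcal{P}_\omega(\beta))=\pi(\omega(e_{12}))+\sum_{i=2}^{m-1}\big(\pi(\omega(e_{i,i+1}))-1\big)\leq -(m-2)<0,$$
and for $m=1,2$ the branch product is a single edge weight of $G$, so $\pi(\mathcal{P}_\omega(\beta))\leq 0$ trivially. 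Thus every branch product lies in $\mathbb{W}_\pi[\lambda]$.

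Finally, since each reduced weight $\mu(e_{ij})$ is by (\ref{eq1.0}) a finite sum of branch products, the subadditivity bound yields $\pi(\mu(e_{ij}))\leq \max_\beta \pi(\mathcal{P}_\omega(\beta))\leq 0$, so every edge weight of $\mathcal{R}_S(G)$ lies in $\mathbb{W}_\pi[\lambda]$ and therefore $\mathcal{R}_S(G)\in\mathbb{G}_\pi$. The ``in particular'' clause then follows immediately: the rational function $\lambda=\lambda/1$ has $\pi(\lambda)=1>0$, so no element of $\mathbb{W}_\pi[\lambda]$ can equal $\lambda$; since $G$ and --- by induction on the length of the reduction sequence, each single reduction keeping us in $\mathbb{G}_\pi$ --- every sequential reduction of $G$ lie in $\mathbb{G}_\pi$, none of their loops (indeed none of their edges) can carry the weight $\lambda$. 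The main obstacle is the degree bookkeeping in the computation $\pi(\lambda-w)=1$, i.e. verifying that no leading-term cancellation occurs; everything else is a formal consequence of the two arithmetic properties of $\pi$.
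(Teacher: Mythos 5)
Your proof is correct: the degree functional $\pi$ is well defined, multiplicative, and subadditive on sums; the computation $\pi(\lambda-w)=1$ for $\pi(w)\leq 0$ is exactly where the hypothesis $G\in\mathbb{G}_\pi$ enters; and the closure of $\mathbb{W}_\pi[\lambda]$ under the branch-product and branch-sum formulas (\ref{eq0.9}) and (\ref{eq1.0}), followed by induction on sequential reductions, gives both claims. This paper omits the proof and defers to \cite{BW10}, but the argument there is this same degree bookkeeping, so your proposal is essentially the standard proof.
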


By the reasoning above, if $G\in\mathbb{G}_\pi$ then $G$ can be (sequentially) reduced to a graph on any subset of its vertex set. This result is stated in the following theorem.

\begin{theorem}\label{theorem-1}\textbf{(Existence of Isospectral Reductions Over any Vertex Set)} Let $G=(V,E,\omega)$ be graph in $\mathbb{G}_\pi$ and suppose $\mathcal{V}$ is a nonempty subset of $V$. Then there exist sets $\mathcal{V}\subseteq S_{m-1}\subseteq\dots\subseteq S_1\subseteq V$ such that $S_1,\dots, S_{m-1},\mathcal{V}$ induces a sequence of reductions on $G$.
\end{theorem}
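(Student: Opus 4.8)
The plan is to prove this by induction on the number of vertices to be removed, deleting them \emph{one at a time} and invoking Lemma \ref{lemma1} to guarantee that we never leave the class $\mathbb{G}_\pi$. Concretely, write $|V|=n$ and $|\mathcal{V}|=k$, fix an arbitrary ordering $u_1,\dots,u_{n-k}$ of the vertices in $V-\mathcal{V}$, and set $S_0=V$ together with $S_i=V-\{u_1,\dots,u_i\}$ for $1\le i\le n-k$. Then $S_i=S_{i-1}-\{u_i\}$ and $S_{n-k}=\mathcal{V}$, so this is a nested chain $\mathcal{V}=S_{n-k}\subseteq\dots\subseteq S_1\subseteq V$ of exactly the required shape; it remains to check that it induces a sequence of reductions. (If $\mathcal{V}=V$ there is nothing to remove and the statement is vacuous, so assume $k<n$.)

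The engine of the induction is the single-vertex observation recorded just before the theorem: if a vertex $v$ of a graph $H\in\mathbb{G}$ satisfies $\omega(e_{vv})\ne\lambda$, then $V(H)-\{v\}$ is a structural set of $H$. I would verify this directly against Definition \ref{def1}. Taking $S=V(H)-\{v\}$ gives $\bar S=\{v\}$; since every non-loop cycle uses at least two distinct vertices, it cannot be confined to the single vertex $v$ and hence meets $S$, which is condition (i), while condition (ii) is precisely the hypothesis $\omega(e_{vv})\ne\lambda$ (with the loop weight read as $0$ when no loop is present).

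Now I would run the induction along the chain. Set $\mathcal{R}_0(G):=G\in\mathbb{G}_\pi$. Assuming $\mathcal{R}_{i-1}(G)\in\mathbb{G}_\pi$ is a graph on the vertex set $S_{i-1}$, the loop weight at $u_i$ in $\mathcal{R}_{i-1}(G)$ lies in $\mathbb{W}_\pi[\lambda]$ and so is not the rational function $\lambda$; by the previous paragraph $S_i=S_{i-1}-\{u_i\}$ is therefore a structural set of $\mathcal{R}_{i-1}(G)$. Defining $\mathcal{R}_i(G)=\mathcal{R}_{S_i}(\mathcal{R}_{i-1}(G))$, Lemma \ref{lemma1} returns $\mathcal{R}_i(G)\in\mathbb{G}_\pi$, which closes the induction. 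After $n-k$ steps we arrive at $S_{n-k}=\mathcal{V}$, so $S_1,\dots,S_{n-k-1},\mathcal{V}$ induces a sequence of reductions on $G$ with final vertex set $\mathcal{V}$, as claimed.

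The main obstacle — and the reason Lemma \ref{lemma1} is indispensable here rather than a mere formality — is guaranteeing that condition (ii) of Definition \ref{def1} survives \emph{every} reduction. A priori an isospectral reduction could create or alter a loop whose weight becomes exactly $\lambda$, at which point removing that vertex would no longer be a legitimate structural-set reduction and the induction would stall. Remaining inside $\mathbb{G}_\pi$ rules this out, since $\pi(\lambda)=1>0$ forces $\lambda\notin\mathbb{W}_\pi[\lambda]$; once closure of $\mathbb{G}_\pi$ under reduction is in hand, everything else is routine bookkeeping about the nested vertex sets.
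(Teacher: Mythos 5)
Your proof is correct and is essentially the paper's own argument: the paper motivates the theorem with exactly this single-vertex removal observation (that $V-\{v_i\}$ is a structural set whenever $\omega(e_{ii})\neq\lambda$) and then invokes Lemma \ref{lemma1} to keep every sequential reduction inside $\mathbb{G}_\pi$, so that vertices of $V-\mathcal{V}$ can be deleted one at a time. Your write-up just makes the induction and the nested chain $S_i=V-\{u_1,\dots,u_i\}$ explicit.
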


For $G\in\mathbb{G}_\pi$ it is therefore possible to reduce a graph $G\in\mathbb{G}_{\pi}$ to a graph on any (nonempty) subset of vertex set via some sequence of isospectral reductions. Moreover, such reductions have the following uniqueness property.

\begin{theorem}\label{theorem-3}\textbf{(Uniqueness of Isospectral Reductions Over any Vertex Set)} Let $G=(V,E,\omega)$ be graph in $\mathbb{G}_\pi$ and suppose $\mathcal{V}$ is a nonempty subset of $V$. If $S_1,\dots, S_{m-1},\mathcal{V}$ and $T_1,\dots, T_{n-1},\mathcal{V}$ both induce a sequence of reductions on $G$ then $\mathcal{R}(G;S_1,\dots, S_{m-1},\mathcal{V})=\mathcal{R}(G;T_1,\dots, T_{n-1},\mathcal{V})$.
\end{theorem}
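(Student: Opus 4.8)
The plan is to show that every sequence of reductions ending in the final vertex set $\mathcal{V}$ computes one and the same algebraic object, namely the Schur complement of $M(G)-\lambda I$ taken over the block indexed by $\bar{\mathcal{V}}=V\setminus\mathcal{V}$. Since that object is built intrinsically from $M(G)$ and the partition $V=\mathcal{V}\sqcup\bar{\mathcal{V}}$, it cannot possibly depend on the intermediate sets $S_1,\dots,S_{m-1}$ or $T_1,\dots,T_{n-1}$, and uniqueness follows at once.

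First I would recast a single reduction algebraically. Since $\mathbb{W}[\lambda]$ is a field (the field of rational functions $\mathbb{C}(\lambda)$), ordinary linear algebra is available. Given $S\in st(G)$, write $M(G)-\lambda I$ in block form with respect to $V=S\sqcup\bar{S}$ as $\left(\begin{smallmatrix} A-\lambda I & B\\ C & D-\lambda I\end{smallmatrix}\right)$, where $D=M(G|_{\bar{S}})$. By equation (\ref{eq0.2}) together with Lemma \ref{lemma1} the determinant $\det(D-\lambda I)=\prod_{v_i\in\bar{S}}(\omega(e_{ii})-\lambda)$ is a nonzero element of $\mathbb{W}[\lambda]$, so $D-\lambda I$ is invertible. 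The identity to verify is then
\[
M(\mathcal{R}_S(G))-\lambda I = (A-\lambda I)-B(D-\lambda I)^{-1}C .
\]
I claim this is exactly the content of Definition \ref{IR}. Splitting $D-\lambda I$ into its diagonal part $\operatorname{diag}(\omega(e_{ii})-\lambda)$ and its off-diagonal part (the non-loop edges inside $\bar{S}$) and expanding $(D-\lambda I)^{-1}$ as a Neumann series, condition (i) of a structural set — no non-loop cycle lies in $\bar{S}$ — forces the off-diagonal part to be nilpotent, so the series terminates; each surviving term is precisely a branch product $\mathcal{P}_\omega(\beta)$ of the form (\ref{eq0.9}), the denominators $\lambda-\omega(e_{ii})$ coming from the inverted diagonal. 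Summing over $\mathcal{B}_{ij}(G;S)$ reproduces (\ref{eq1.0}). Thus a reduction \emph{is} a Schur complement, consistent with the determinant factorization underlying Theorem \ref{maintheorem}.

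Second I would compose these Schur complements along the chain. Because Lemma \ref{lemma1} keeps us inside $\mathbb{G}_\pi$ at every stage, all intermediate diagonal blocks being inverted remain invertible, so each step $\mathcal{R}_{S_{i+1}}(\mathcal{R}_i(G))$ is the Schur complement of the current matrix over the block it removes; the blocks removed across the whole sequence partition $V\setminus\mathcal{V}$. Invoking the Haynsworth quotient formula — a Schur complement of a Schur complement is a single Schur complement over the union of the removed blocks — and iterating down the chain collapses the entire sequence into the one Schur complement of $M(G)-\lambda I$ over the fixed block $\bar{\mathcal{V}}$. Running the same argument on $T_1,\dots,T_{n-1},\mathcal{V}$ yields the identical matrix, so $M(\mathcal{R}(G;S_1,\dots,\mathcal{V}))=M(\mathcal{R}(G;T_1,\dots,\mathcal{V}))$. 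Since a graph in $\mathbb{G}$ on a fixed vertex set is determined by its weighted adjacency matrix (edges being exactly the nonzero entries), the two reductions coincide.

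The main obstacle is the first step: verifying rigorously that the combinatorial branch-product prescription (\ref{eq1.0}) agrees with the algebraic Schur complement, including the sign bookkeeping in the Neumann expansion of $(D-\lambda I)^{-1}$ and the check that termination of that expansion corresponds exactly to the structural-set condition. Once this identity and the invertibility guaranteed by Lemma \ref{lemma1} are in hand, the quotient formula finishes the argument essentially formally. (One could instead avoid the quotient formula by proving that two admissible single-vertex removals commute and then arguing by a confluence/refinement argument, but the Schur-complement computation seems both shorter and more transparent.)
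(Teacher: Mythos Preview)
The paper does not actually prove this theorem: along with Theorems~\ref{maintheorem} and~\ref{theorem-1} and Lemma~\ref{lemma1}, its proof is deferred to \cite{BW10} (see the closing paragraph of the introduction). There is therefore nothing in the present text to compare your argument against.

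That said, your Schur-complement approach is correct and is the standard algebraic route to this uniqueness statement. The identification of $M(\mathcal{R}_S(G))-\lambda I$ with the Schur complement of $M(G)-\lambda I$ over the $\bar S$-block is exactly right, and the sign issue you anticipate resolves cleanly once you write $(D-\lambda I)^{-1}=-(\lambda I-D)^{-1}$ and expand the latter as a finite geometric series in $\tilde\Delta^{-1}N$ with $\tilde\Delta=\operatorname{diag}(\lambda-\omega(e_{ii}))$ and $N$ the off-diagonal part of $D$; all terms then carry plus signs and match the branch products~(\ref{eq0.9}) on the nose. One point you should make explicit: the Haynsworth quotient formula, in the direction you use it, requires the full $\bar{\mathcal{V}}$-block of $M(G)-\lambda I$ to be invertible over $\mathbb{C}(\lambda)$, and since $\mathcal{V}$ is not assumed to be a structural set this does not follow directly from equation~(\ref{eq0.2}). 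It does, however, follow from the multiplicativity $\det(E)=\det(F)\cdot\det(E/F)$ applied down either chain, each factor being nonzero by the structural-set hypothesis in force at that step (this is where Lemma~\ref{lemma1} is genuinely needed). With that detail supplied, your outline goes through.
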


The results of theorem \ref{theorem-1} and theorem \ref{theorem-3} allow for the following definition.

\begin{definition}\label{note}
Let $G=(V,E,\omega)$ be graph in $\mathbb{G}_\pi$. If $\mathcal{V}\subseteq V$ is nonempty define
$$\mathcal{R}_{\mathcal{V}}[G]=\mathcal{R}(G;S_1,\dots, S_{m-1},\mathcal{V})$$
where $S_1,\dots, S_{m-1},\mathcal{V}$ is any sequence that induces a sequence of reductions on $G$ with final vertex set $\mathcal{V}$.
\end{definition}

The graph $\mathcal{R}_{\mathcal{V}}[G]$ is well defined as a result of theorems \ref{theorem-1} and \ref{theorem-3}. The notation $\mathcal{R}_\mathcal{V}[G]$ given in definition \ref{note} is intended to emphasize the fact that $\mathcal{V}$ need not be a structural set of $G$.

\begin{remark}
Note that $\pi(c)=0$ for any $c\in\mathbb{C}$. Hence, if $M(G)\in\mathbb{C}^{n\times n}$ then $G\in\mathbb{G}_\pi$. Therefore, any graph with complex weights can be uniquely reduced to a graph on any nonempty subset of its vertex set. This is of particular importance for the estimation of spectra of matrices with complex entries in \cite{BW09}.
\end{remark}

\begin{figure}
  \begin{center}
    \begin{overpic}[scale=.5]{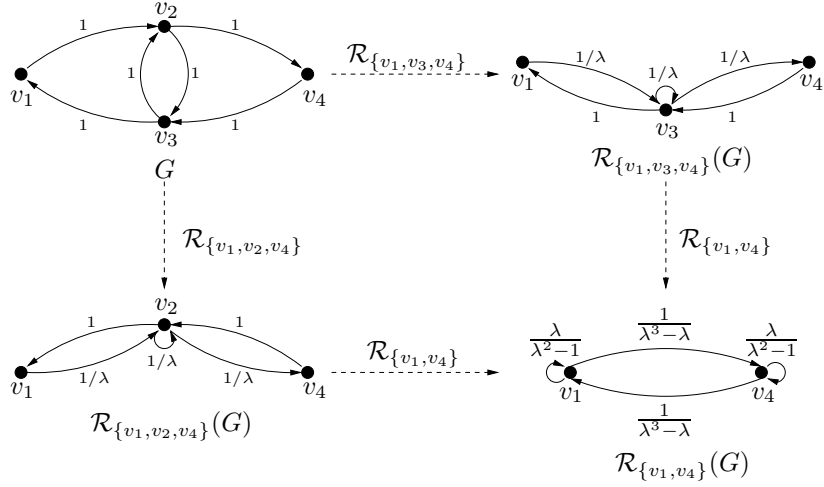}
    \put(17.5,35.5){$G$}
    \put(8,54){\tiny$1$}
    \put(22,48){\tiny$1$}
    \put(27,54){\tiny$1$}
    \put(8,41.5){\tiny$1$}
    \put(14,48){\tiny$1$}
    \put(27,41.5){\tiny$1$}
    \put(-.5,45){$v_1$}
    \put(17.5,56){$v_2$}
    \put(17.5,40){$v_3$}
    \put(36,45){$v_4$}

    \put(62,47){$v_1$}
    \put(80,41){$v_3$}
    \put(98,47){$v_4$}
    \put(72,42.5){\tiny$1$}
    \put(89,42.5){\tiny$1$}
    \put(70,50){\tiny$1/\lambda$}
    \put(88,50){\tiny$1/\lambda$}
    \put(79,48){\tiny$1/\lambda$}
    \put(72,37){$\mathcal{R}_{\{v_1,v_3,v_4\}}(G)$}

    \put(-.5,8.5){$v_1$}
    \put(17.5,19){$v_2$}
    \put(36,8.5){$v_4$}
    \put(8,10){\tiny$1/\lambda$}
    \put(26,10){\tiny$1/\lambda$}
    \put(16.5,12){\tiny$1/\lambda$}
    \put(9,17){\tiny$1$}
    \put(27.5,17){\tiny$1$}
    \put(9.5,4){$\mathcal{R}_{\{v_1,v_2,v_4\}}(G)$}

    \put(68,8){$v_1$}
    \put(92,8){$v_4$}
    \put(64,14.5){$\frac{\lambda}{\lambda^2-1}$}
    \put(91,14.5){$\frac{\lambda}{\lambda^2-1}$}
    \put(77.5,4.5){$\frac{1}{\lambda^3-\lambda}$}
    \put(77.5,16.5){$\frac{1}{\lambda^3-\lambda}$}
    \put(75,-1){$\mathcal{R}_{\{v_1,v_4\}}(G)$}

    \put(41.5,50){$\mathcal{R}_{\{v_1,v_3,v_4\}}$}
    \put(21,27){$\mathcal{R}_{\{v_1,v_2,v_4\}}$}
    \put(44,13){$\mathcal{R}_{\{v_1,v_4\}}$}
    \put(83,27){$\mathcal{R}_{\{v_1,v_4\}}$}

    \end{overpic}
  \end{center}
  \caption{Distinct sequences of isospectral reductions with the same final vertex set and outcome.}\label{fig6}
\end{figure}

\begin{example}
Let $G=(V,E,\omega)$ be the graph shown in figure \ref{fig6}. Our goal is to reduce $G$ over the vertex set $\{v_1,v_4\}\subset V$. Note that as $G\in\mathbb{G}_\pi$ theorem \ref{theorem-1} guarantees that there is at least one sequence of reductions that reduces $G$ to the graph $\mathcal{R}_{\{v_1,v_4\}}[G]$.

In fact there are exactly two. This follows from the fact that $\{v_1,v_4\}\notin st(G)$. Hence, $G$ cannot be reduced over $\{v_1,v_4\}$ with a single reduction. However, any (nontrivial) reduction of $G$ removes at least one vertex from $G$.

Therefore, the two possible ways of reducing $G$ over the vertex set $\{v_1,v_4\}$ are
\begin{align}\label{Al1}
\mathcal{R}_{\{v_1,v_4\}}[G]&=\mathcal{R}(G;\{v_1,v_2,v_4\},\{v_1,v_4\}); \ \ \text{and}\\
\mathcal{R}_{\{v_1,v_4\}}[G]&=\mathcal{R}(G;\{v_1,v_3,v_4\},\{v_1,v_4\}).\label{Al2}
\end{align}

Both of the reductions given in (\ref{Al1}) and (\ref{Al2}) are shown in figure \ref{fig6}. The dashed arrows labeled $\mathcal{R}_T$ in this figure represent the reduction of a graph over some structural set $T$. This notation is meant to emphasize that this diagram commutes. That is,
$$\mathcal{R}_{\{v_1,v_4\}}\big(\mathcal{R}_{\{v_1,v_2,v_4\}}(G)\big)=\mathcal{R}_{\{v_1,v_4\}}\big(\mathcal{R}_{\{v_1,v_3,v_4\}}(G)\big)$$
as guaranteed by theorem \ref{theorem-3}.
\end{example}

\subsection{Equivalence Relations}

Theorem \ref{theorem-1} and theorem \ref{theorem-3} assert that a graph $G\in\mathbb{G}_\pi$ has a unique reduction to any (nonempty) subset of its vertex set via some sequence of isospectral reductions. In this section this property will allow us to define various equivalence relations on the graphs in $\mathbb{G}_{\pi}-\emptyset$.

Two weighted digraphs $G_1=(V_1,E_1,\omega_1)$, and $G_2=(V_2,E_2,\omega_2)$ are \textit{isomorphic} if there is a bijection $b:V_1\rightarrow V_2$ such that there is an edge $e_{ij}$ in $G_1$ from $v_i$ to $v_j$ if and only if there is an edge $\tilde{e}_{ij}$ between $b(v_i)$ and $b(v_j)$ in $G_2$ with $\omega_2(\tilde{e}_{ij})=\omega_1(e_{ij})$. If the map $b$ exists it is called an \textit{isomorphism} and we write $G_1\simeq G_2$.

An isomorphism is essentially a relabeling of the vertices of a graph. Therefore, if two graphs are isomorphic then their spectra are identical.

\begin{figure}
  \begin{center}
    \begin{overpic}[scale=.33]{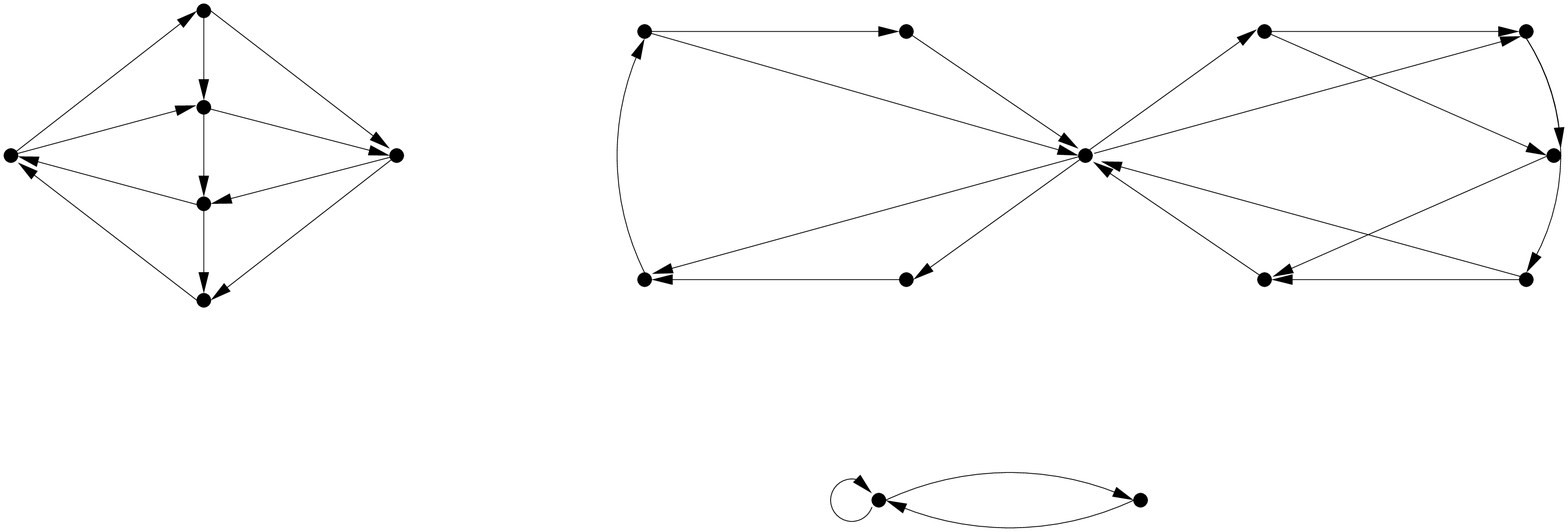}
    \put(-1,27){$v_1$}
    \put(24,27){$v_2$}
    \put(12,17){$G$}

    \put(67,17){$H$}
    \put(68,27){$v_1$}
    \put(101,30){$v_2$}

    \put(35,7.5){$\frac{1}{\lambda^2}+\frac{2}{\lambda^3}+\frac{1}{\lambda^4}$}
    \put(60,11.75){$\frac{2}{\lambda}+\frac{1}{\lambda^3}$}
    \put(60,3.5){$\frac{2}{\lambda}+\frac{1}{\lambda^3}$}
    \put(40,-1.5){$\mathcal{R}_{\tau(G)}[G]\simeq\mathcal{R}_{\tau(H)}[H]$}
    \put(55,5){$v_1$}
    \put(71,5){$v_2$}
    \end{overpic}
  \end{center}
  \caption{$G$ and $H$ are equivalent under the relation induced by the rule $\tau$ given in example \ref{ex0}.}\label{fig7}
\end{figure}

\begin{theorem}\textbf{(Spectral Equivalence)}\label{theorem0.1}
Suppose for any graph $G=(V,E,\omega)$ in $\mathbb{G}_\pi-\emptyset$ that $\tau$ is a rule that selects a unique nonempty subset $\tau(G)\subseteq V$. Then $\tau$ induces an equivalence relation $\sim$ on the set $\mathbb{G}_\pi-\emptyset$ where $G\sim H$ if the graph $\mathcal{R}_{\tau(G)}[G]\simeq\mathcal{R}_{\tau(H)}[H]$.
\end{theorem}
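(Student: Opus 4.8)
The plan is to verify directly that $\sim$ satisfies the three defining properties of an equivalence relation—reflexivity, symmetry, and transitivity—by reducing each to the corresponding property of graph isomorphism $\simeq$. The conceptual point I would emphasize is that $\sim$ is nothing more than the pullback of $\simeq$ along the map $\Phi\colon\mathbb{G}_\pi-\emptyset\to\mathbb{G}_\pi$ given by $\Phi(G)=\mathcal{R}_{\tau(G)}[G]$, and the preimage of any equivalence relation under any well-defined function is again an equivalence relation. The whole proof therefore splits into two tasks: showing that $\Phi$ is genuinely a function, and recalling that $\simeq$ is an equivalence relation on weighted digraphs.

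First I would check that $\mathcal{R}_{\tau(G)}[G]$ is a single, unambiguously determined graph for each $G\in\mathbb{G}_\pi-\emptyset$, so that $\Phi$ is a bona fide map. This is where the hypotheses do their work. By assumption the rule $\tau$ assigns to $G$ a unique nonempty subset $\tau(G)\subseteq V$; and since $G\in\mathbb{G}_\pi$, theorem \ref{theorem-1} guarantees that some sequence of reductions with final vertex set $\tau(G)$ exists, while theorem \ref{theorem-3} guarantees that the resulting graph does not depend on the chosen sequence. Hence $\mathcal{R}_{\tau(G)}[G]$ is well defined in the sense of definition \ref{note}, and $\Phi$ is a well-defined map on $\mathbb{G}_\pi-\emptyset$.

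With $\Phi$ in hand, the three properties follow from the standard fact that $\simeq$ is itself an equivalence relation. For reflexivity, $\Phi(G)\simeq\Phi(G)$ via the identity bijection, so $G\sim G$. For symmetry, if $G\sim H$ then $\Phi(G)\simeq\Phi(H)$, and inverting the isomorphism yields $\Phi(H)\simeq\Phi(G)$, i.e. $H\sim G$. For transitivity, $G\sim H$ and $H\sim K$ supply isomorphisms $\Phi(G)\simeq\Phi(H)$ and $\Phi(H)\simeq\Phi(K)$, whose composition is an isomorphism $\Phi(G)\simeq\Phi(K)$, giving $G\sim K$. Each step uses only that isomorphisms of weighted digraphs can be taken to be identities, inverted, and composed, which is immediate from the definition of $\simeq$ given just before the statement.

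The only genuine obstacle is the well-definedness step, and even that is not a new difficulty: it is entirely discharged by theorems \ref{theorem-1} and \ref{theorem-3}. This is precisely why the statement restricts to $\mathbb{G}_\pi$ rather than all of $\mathbb{G}$, and why $\tau$ is required to return a \emph{unique} nonempty subset. Once those facts are invoked, the remainder is the routine verification that pulling back $\simeq$ preserves the equivalence-relation axioms. I would present the argument in exactly this order—establish that $\Phi$ is well defined, recall that $\simeq$ is an equivalence relation, and then read off reflexivity, symmetry, and transitivity of $\sim$.
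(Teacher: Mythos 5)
Your proof is correct and takes essentially the same route as the paper's (whose proof is deferred to \cite{BW10}): the well-definedness of $G\mapsto\mathcal{R}_{\tau(G)}[G]$ is exactly what theorems \ref{theorem-1} and \ref{theorem-3} supply, and reflexivity, symmetry, and transitivity of $\sim$ are then inherited from graph isomorphism via the identity, inverse, and composition of weight-preserving bijections. Nothing is missing.
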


Two graphs may look very different but be spectrally equivalent, in which case the corresponding networks have similar dynamics. Choosing an appropriate rule $\tau$ can help discover this similarity.

\begin{example}\label{ex0}
Let $G=(V,E,\omega)$. For $v_i\in V$ let $c(v_i)$ be the number of cycles in $G$ that contain $v_i$. If $c_{max}(G)=\max_{v_i\in V}c(v_i)$ let $$\tau(G):=\{v_i\in V: c(v_i)\geq c_{max}(G)/2\}.$$

Observe that for each graph $G\in\mathbb{G}_\pi-\emptyset$ the set $\tau(G)$ both exists and is unique. Thus the relation of having an isomorphic reduction with respect to this rule induces an equivalence relation on $\mathbb{G}_\pi-\emptyset$.

In figure \ref{fig7} the graphs $G$ and $H$ have the vertex set $\tau(G)=\{v_1,v_2\}=\tau(H)$. As shown in the figure, the graph $\mathcal{R}_{\tau(G)}[G]\simeq\mathcal{R}_{\tau(H)}[H]$. Hence, $G\sim H$ under the relation $\sim$ induced by the rule $\tau$.
\end{example}

Importantly, choosing a rule that selects a unique vertex set of each graph allows one to study the graphs in $\mathbb{G}_\pi-\emptyset$ modulo some particular graph feature. (In example \ref{ex0} this graph feature or vertex set are the vertices that are part of less than $c_{max}(G)/2$ cycles of $G$.) This allows experimentalists the opportunity to compare the \emph{reduced topology} of various networks which have been reduced over any subset of network elements deemed important in the particular research field or by a particular researcher.

\section{Weight Preserving Isospectral Transformations}

The isospectral graph reductions introduced in section 2 modify not only the graph structure but also the weight set of a graph. That is, if $\mathcal{R}_S(G)=(S,\mathcal{E},\mu)$ is any reduction of $G=(V,E,\omega)$ then typically $\omega(E)\neq \mu(\mathcal{E})$, i.e.
$$\{\omega(e_{ij}):e_{ij}\in E\}\neq\{\mu(e_{ij}):e_{ij}\in \mathcal{E}\}.$$
This may lead one to assume that this procedure simply shifts the complexity of the graph's structure to its set of edge weights. However, this is not the case.

In this section we introduce graph transformations that modify the structure of a graph but preserve the weights of the graph's edges. As before, the procedure preserves the spectrum of the graph up to a known set. Such transformations are of particular importance in section 4.2 where dynamical network expansions are discussed.

The idea behind an isospectral graph transformation that preserves a graph's edge weights is simple enough. If two graphs $G,H\in\mathbb{G}$ have the same branch structure (including weights) then they should have similar spectra.

To make this precise suppose $G=(V,E,\omega)$ and $S\in st(G)$. If the branch $\beta=v_{1},\dots,v_{m}\in\mathcal{B}_S(G)$ let $\Omega_G(\beta)$ be the ordered sequence $$\Omega_G(\beta)=\omega(e_{12}),\dots,\omega(e_{i-1,i}),\omega(e_{ii}),\omega(e_{i,i_+1}),\dots,\omega(e_{m-1,m}).$$ for $m>1$ and $\omega(e_{ii})$ if $m=1$.

Let $G,H\in\mathbb{G}$. Suppose $S=\{v_1,\dots,v_m\}$ is a structural set of both $G$ and $H$. The branch set $\mathcal{B}_{ij}(G;S)$ is \textit{isomorphic} to $\mathcal{B}_{ij}(H;S)$ if there is a bijection
$$b:\mathcal{B}_{ij}(G;S)\rightarrow\mathcal{B}_{ij}(H;S)$$
such that $\Omega_G(\beta)=\Omega_H(b(\beta))$ for each $\beta\in\mathcal{B}_{ij}(G;S)$. If such a map exists we write $\mathcal{B}_{ij}(G;S)\simeq\mathcal{B}_{ij}(H;S)$. If
$$\mathcal{B}_{ij}(G;S)\simeq\mathcal{B}_{ij}(H;S) \ \ \text{for each} \ \ 1\leq i,j\leq m$$
we say $\mathcal{B}_S(G)$ is \textit{isomorphic} to $\mathcal{B}_S(H)$ and write $\mathcal{B}_S(G)\simeq\mathcal{B}_S(H)$.

\begin{definition}
If $S$ is a structural set of both $G,H\in\mathbb{G}$ and $\mathcal{B}_S(G)\simeq\mathcal{B}_S(H)$ we say $G$ is a \emph{weight preserving isospectral transformation} of $H$ over $S$.
\end{definition}

\begin{example}\label{ex9}
Suppose $H$ and $G$ are the graphs in figure \ref{fig9}. Note that the vertex set $S=\{v_1,v_3\}$ is a structural set of both $H$ and $G$. Moreover,
\begin{align*}
\mathcal{B}_{11}(H;S)=\{v_1,w_2,v_1\},& \hspace{0.15in} \mathcal{B}_{11}(G;S)=\{v_1,v_5,v_1\};\\
\mathcal{B}_{13}(H;S)=\{v_1,w_2,v_3\},& \hspace{0.15in} \mathcal{B}_{13}(G;S)=\{v_1,v_2,v_3\};\\
\mathcal{B}_{31}(H;S)=\{v_3,w_4,v_1\},& \hspace{0.15in} \mathcal{B}_{31}(G;S)=\{v_3,v_4,v_1\};\\
\mathcal{B}_{33}(H;S)=\{v_3,w_2,v_3\},& \hspace{0.15in} \mathcal{B}_{33}(G;S)=\{v_3,v_6,v_3\}.
\end{align*}

Note that the branch $\beta=\{v_1,v_5,v_1\}$ in $\mathcal{B}_{11}(G;S)$ has the weight sequence given by $\Omega_G(\beta)=1,1,1$. Similarly, the branch $\gamma=\{v_1,w_2,v_1\}$ in $\mathcal{B}_{11}(H;S)$ has the weight sequence $\Omega_H(\gamma)=1,1,1$. Hence, $\mathcal{B}_{11}(H;S)\simeq\mathcal{B}_{11}(G;S)$. Continuing in this manner, one can check that each $\mathcal{B}_{ij}(H;S)\simeq\mathcal{B}_{ij}(G;S)$ for $i,j\in\{1,3\}$. Therefore, $\mathcal{B}_S(H)\simeq\mathcal{B}_S(G)$ or $G$ is a weight preserving isospectral transformation of $H$ over $S$.
\end{example}

\begin{figure}
  \begin{center}
    \begin{overpic}[scale=.5]{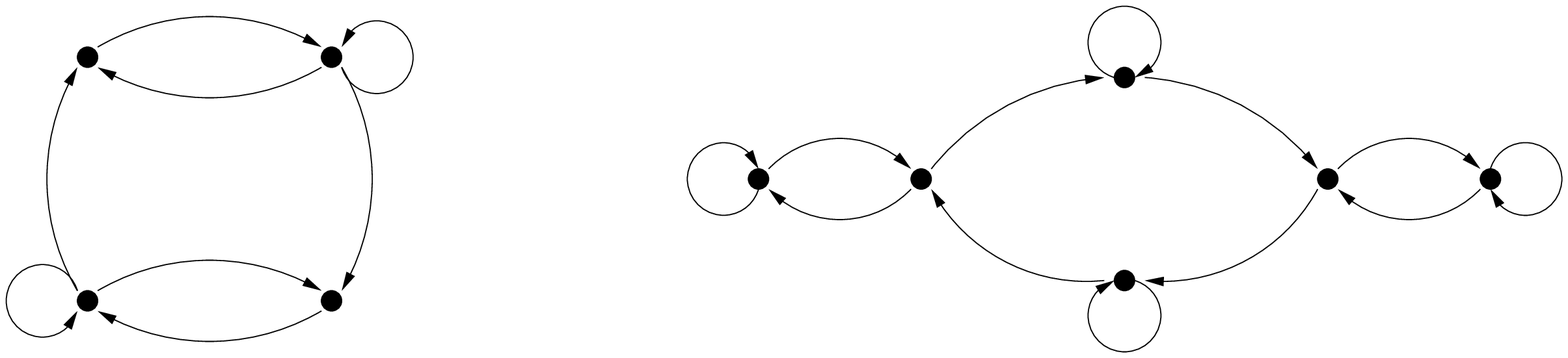}
    \put(4,21.5){$v_1$}
    \put(18.5,21.5){$w_2$}
    \put(20,1){$v_3$}
    \put(4.5,1){$w_4$}
    \put(11.5,-2){$H$}

    \put(47.5,14){$v_5$}
    \put(57.5,14){$v_1$}
    \put(70.5,16){$v_2$}
    \put(70.5,6.5){$v_4$}
    \put(83.5,14){$v_3$}
    \put(93.25,14){$v_6$}
    \put(70.5,-2){$G$}

    \end{overpic}
  \end{center}
  \caption{The graph $G$ is a isospectral expansion of the graph $H$ with respect to the structural set $S=\{v_1,v_3\}$.}\label{fig9}
\end{figure}

\begin{remark}
We note that if $\mathcal{B}_S(H)\simeq\mathcal{B}_S(G)$ the graphs $G$ and $H$ need not be isomorphic (see example \ref{ex9}).
\end{remark}

As $\mathcal{B}_S(G)\simeq\mathcal{B}_S(H)$ implies that $\mathcal{R}_S(G)=\mathcal{R}_S(H)$ we have the following corollary to theorem \ref{maintheorem}.

\begin{corollary}
If the graph $G$ is a weight preserving isospectral transformation of $H$ over $S$ then
$$\big(\sigma(G)\cup\sigma^{-1}(G|_{\bar{S}})\big)-\big(\sigma(G|_{\bar{S}}\cup\sigma^{-1}(G)\big)=
\big(\sigma(H)\cup\sigma^{-1}(H|_{\bar{S}})\big)-\big(\sigma(H|_{\bar{S}}\cup\sigma^{-1}(H)\big).$$
\end{corollary}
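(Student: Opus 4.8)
The plan is to reduce the claim to a single application of Theorem \ref{maintheorem}, after first establishing the equality of reductions $\mathcal{R}_S(G)=\mathcal{R}_S(H)$ asserted in the sentence preceding the statement. I would begin by unpacking the hypothesis: saying that $G$ is a weight preserving isospectral transformation of $H$ over $S$ means precisely that $S$ is a structural set of both graphs and that $\mathcal{B}_S(G)\simeq\mathcal{B}_S(H)$, i.e. for each pair $1\leq i,j\leq m$ there is a bijection $b:\mathcal{B}_{ij}(G;S)\rightarrow\mathcal{B}_{ij}(H;S)$ with $\Omega_G(\beta)=\Omega_H(b(\beta))$ for every branch $\beta$. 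Because $S$ is structural for each graph, Theorem \ref{maintheorem} applies to both.

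The crux is to show that this branch isomorphism forces the two reductions to coincide. The key observation is that the branch product $\mathcal{P}_\omega(\beta)$ of equation (\ref{eq0.9}) is a function only of the ordered weight sequence $\Omega_G(\beta)$: reading off the formula $\mathcal{P}_\omega(\beta)=\omega(e_{12})\prod_{i=2}^{m-1}\omega(e_{i,i+1})/(\lambda-\omega(e_{ii}))$ shows it is assembled entirely from the entries of $\Omega_G(\beta)$, and likewise in the degenerate cases $m=1,2$. Consequently, whenever $\Omega_G(\beta)=\Omega_H(b(\beta))$ we obtain $\mathcal{P}_{\omega_G}(\beta)=\mathcal{P}_{\omega_H}(b(\beta))$, where $\omega_G,\omega_H$ denote the weight functions of $G,H$. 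Summing over each branch set and using that $b$ is a bijection, the edge weights defined in equation (\ref{eq1.0}) satisfy $\mu_G(e_{ij})=\mu_H(e_{ij})$ for all $i,j$. Since both reductions are defined on the same vertex set $S$, this yields $\mathcal{R}_S(G)=\mathcal{R}_S(H)$ as weighted graphs, not merely up to isomorphism.

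With this equality in hand the corollary is immediate. Applying Theorem \ref{maintheorem} separately to $G$ and to $H$ gives expressions for $\sigma(\mathcal{R}_S(G))$ and $\sigma(\mathcal{R}_S(H))$ equal to the left- and right-hand sides of the claimed identity. Because $\mathcal{R}_S(G)$ and $\mathcal{R}_S(H)$ are literally the same graph, they have identical spectra, so $\sigma(\mathcal{R}_S(G))=\sigma(\mathcal{R}_S(H))$, and equating the two instances of Theorem \ref{maintheorem} produces exactly the desired equality of set differences.

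As for obstacles, there is essentially only one substantive step, namely verifying that the branch product depends solely on the weight sequence $\Omega_G(\beta)$; everything else is bookkeeping. This is a direct inspection of equation (\ref{eq0.9}), and the only mild care required is to handle the short branches ($m=1$ and $m=2$) separately, since the product formula degenerates there but the defining rule $\mathcal{P}_\omega(\beta)=\omega(e_{1m})$ still depends only on $\Omega_G(\beta)$. I do not anticipate any difficulty with the spectral set-difference arithmetic, since Theorem \ref{maintheorem} already packages that computation and we need only observe that the two instances share a common left-hand side.
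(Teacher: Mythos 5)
Your proposal is correct and follows the same route as the paper: the paper's entire justification is the remark preceding the corollary, namely that $\mathcal{B}_S(G)\simeq\mathcal{B}_S(H)$ forces $\mathcal{R}_S(G)=\mathcal{R}_S(H)$ (since branch products depend only on the weight sequences $\Omega$), after which Theorem \ref{maintheorem} applied to both graphs yields the stated equality. Your write-up simply makes explicit the bookkeeping that the paper leaves implicit.
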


For $G\in\mathbb{G}$ and $S\in st(G)$ suppose $\alpha=v_1,\dots,v_m$ and $\beta=u_1,\dots,u_n$ are branches in $\mathcal{B}_S(G)$. These branches are said to be \textit{independent} if $$\{v_2,\dots,v_{m-1}\}\cap\{u_2,\dots,u_{n-1}\}= \emptyset.$$
That is, $\alpha$ and $\beta$ are independent if they share no interior vertices.

\begin{definition}\label{def10}
Let $G,H\in\mathbb{G}$ and $S\in st(G),st(H)$. Suppose\\
(i) $\mathcal{B}_S(G)\simeq\mathcal{B}_T(H)$;\\
(ii) the branches of $\mathcal{B}_S(H)$ are independent;\\
(iii) each vertex of $G$ and $H$ belongs to a branch of $\mathcal{B}_S(G)$ and $\mathcal{B}_S(H)$ respectively.
Then we call $H$ an \textit{isospectral expansion} of $G$ with respect to $S$.
\end{definition}

Isospectral expansions are particular types of weight preserving isospectral transformations and moreover are unique up to a labeling of vertices. Therefore, any two expansions of $G$ with respect to $S$ are isomorphic. By slight abuse of terminology we let $\mathcal{X}_S(G)$ be any representative from the set of isospectral expansions and call $\mathcal{X}_S(G)$ the \textit{isospectral expansion} of $G$ with respect to $S$.

\begin{theorem}\label{theorem-2}
Let $G=(V,E,\omega)$ with structural set $S$. Then the graph $G$ and its isospectral expansion $\mathcal{X}_S(G)$ have the same set of edge weights. Moreover,  $$\det\big(M(\mathcal{X}_S(G))-\lambda I \big)=\det\big(M(G)-\lambda I\big)\prod_{v_i\in V-S}\big(\omega(e_{ii})-\lambda\big)^{n_i-1}$$ where $n_i$ is the number of branches in $\mathcal{B}_S(G)$ containing $v_i$.
\end{theorem}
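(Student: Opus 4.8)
Throughout write $H=\mathcal{X}_S(G)$, and recall the observation already made in the text that $\mathcal{B}_S(G)\simeq\mathcal{B}_S(H)$ forces the two reductions over $S$ to coincide, i.e. $\mathcal{R}_S(G)=\mathcal{R}_S(H)$. The plan is to deduce both assertions of the theorem from this single fact together with the determinant form of Theorem \ref{maintheorem}, reserving the genuine work for the comparison of the two restrictions $G|_{\bar S}$ and $H|_{\bar S}$.

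First I would settle the claim about edge weights. Every edge of $G$ occurs in at least one branch of $\mathcal{B}_S(G)$: a loop at a vertex of $S$ is itself a branch of length one; a loop at a non-structural vertex sits at an interior vertex of some branch (condition (iii) of Definition \ref{def10} guarantees every vertex lies on a branch); and any remaining edge extends forward and backward to a branch, since every nonloop cycle meets $S$ and so the extension terminates in $S$. Consequently the set $\{\omega(e):e\in E\}$ is exactly the set of entries appearing in the sequences $\Omega_G(\beta)$, $\beta\in\mathcal{B}_S(G)$, and the same holds for $H$. Because the bijection $b$ realizing $\mathcal{B}_S(G)\simeq\mathcal{B}_S(H)$ satisfies $\Omega_G(\beta)=\Omega_H(b(\beta))$ entrywise, these two sets of entries coincide, giving equality of the edge-weight sets.

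For the determinant identity I would first record the polynomial form of Theorem \ref{maintheorem}: for any $K\in\mathbb{G}$ with $S\in st(K)$,
$$\det\big(M(\mathcal{R}_S(K))-\lambda I\big)=\frac{\det\big(M(K)-\lambda I\big)}{\det\big(M(K|_{\bar S})-\lambda I\big)}.$$
This can be lifted from the proof of Theorem \ref{maintheorem}, or re-derived directly: ordering the vertices of $K$ with $S$ first writes $M(K)-\lambda I$ as a block matrix $\left(\begin{smallmatrix}A-\lambda I & B\\ C & D-\lambda I\end{smallmatrix}\right)$, and one checks that the branch-product rule \eqref{eq0.9}--\eqref{eq1.0} is precisely the entrywise expansion of the Schur complement $(A-\lambda I)-B(D-\lambda I)^{-1}C$, each interior vertex of a branch contributing the factor $1/(\lambda-\omega(e_{ii}))$ matching a diagonal term of the resolvent $(\lambda I-D)^{-1}$; the Schur determinant formula then yields the displayed identity. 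Applying it to both $G$ and $H$ and cancelling the common left-hand side, using $\mathcal{R}_S(G)=\mathcal{R}_S(H)$, gives
$$\det\big(M(H)-\lambda I\big)=\det\big(M(G)-\lambda I\big)\cdot\frac{\det\big(M(H|_{\bar S})-\lambda I\big)}{\det\big(M(G|_{\bar S})-\lambda I\big)}.$$

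It then remains to evaluate this ratio, and this is where the main care is needed. Since $S$ is structural for both graphs, neither $G|_{\bar S}$ nor $H|_{\bar S}$ has a nonloop cycle, so by \eqref{eq0.2} each restriction determinant is a product of factors $(\omega(e_{ii})-\lambda)$ over the non-structural vertices of the respective graph. In $G$ each $v_i\in V-S$ contributes a single such factor. In $H$, because the branches of $\mathcal{B}_S(H)$ are \emph{independent}, the interior vertices of distinct branches are distinct, so $v_i$ is replaced by exactly $n_i$ separate vertices of $H$ — one in the image under $b$ of each of the $n_i$ branches of $\mathcal{B}_S(G)$ through $v_i$ — and each copy carries loop weight $\omega(e_{ii})$, since $\Omega$ records interior loop weights and is preserved by $b$. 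Hence the numerator supplies $(\omega(e_{ii})-\lambda)^{n_i}$ against the denominator's $(\omega(e_{ii})-\lambda)$, and dividing produces $\prod_{v_i\in V-S}(\omega(e_{ii})-\lambda)^{n_i-1}$, as claimed. The delicate point throughout is exactly this bookkeeping: verifying that independence is what makes the $n_i$ copies appear as independent linear factors of $\det(M(H|_{\bar S})-\lambda I)$ rather than collapsing, and that each inherits the correct loop weight; the Schur-complement reading of the branch-product rule, by contrast, is routine once $(\lambda I-D)^{-1}$ is expanded along the (cycle-free up to loops) paths through $\bar S$.
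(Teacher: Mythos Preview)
The paper does not actually prove Theorem~\ref{theorem-2}; it defers the proof to \cite{BW10} (see the statement in the introduction that ``Proofs of all mathematical statements not found in this paper can be found in \cite{BW10}''). So there is no in-paper argument to compare against.

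That said, your argument is sound and is the natural one. The key step---rewriting Theorem~\ref{maintheorem} as the determinant identity $\det(M(K)-\lambda I)=\det(M(K|_{\bar S})-\lambda I)\cdot\det(M(\mathcal{R}_S(K))-\lambda I)$ via the Schur complement, with the branch-product rule \eqref{eq0.9}--\eqref{eq1.0} read off from the Neumann expansion of $(\lambda I-D)^{-1}$ along acyclic paths in $\bar S$---is exactly how the underlying result in \cite{BW10} is established, so you are not doing anything exotic. The counting in the final paragraph is also correct: independence of the branches of $\mathcal{B}_S(H)$ guarantees that the non-structural vertices of $H$ are in bijection with pairs $(\beta,v_i)$ where $v_i$ is an interior vertex of $\beta\in\mathcal{B}_S(G)$, and preservation of $\Omega$ pins down each loop weight. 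One small point worth tightening: your claim that every edge of $G$ lies on some branch leans on condition~(iii) of Definition~\ref{def10} for vertices, and you should spell out why this forces the needed forward/backward extensions to terminate in $S$ without revisiting a vertex (finiteness plus the structural-set condition on nonloop cycles does this, but it deserves a sentence). Otherwise the proposal is complete.
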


\begin{example}\label{ex10}
Consider the graph $G=(V,E,\omega)$ and $H=(\mathcal{V},\mathcal{E},\mu)$ in figure \ref{fig9}. As demonstrated in example \ref{ex9}, if $S=\{v_1,v_3\}$ then the branch set $\mathcal{B}_S(G)\simeq\mathcal{B}_S(H)$. Moreover, it can be seen from figure \ref{fig9} that the four branches of $\mathcal{B}_S(G)$ share no interior vertices. That is, the branches of $\mathcal{B}_S(G)$ are pairwise independent. Lastly, each vertex of $G$ belongs to at least one branch of $\mathcal{B}_S(G)$.

Therefore, the graph $G$ is an isospectral expansion of $H$ with respect to $S$ or $G=\mathcal{X}_S(H)$. Importantly, note that the edge weights of $H$ and its expansion $G$ are identical, i.e. the sets $\omega(E)$ and $\mu(\mathcal{E})$ are both $\{1\}$.

Moreover, observe that the vertex $w_2$ of $H$ is an interior vertex of the branches $v_1,w_2,v_1;$ $v_1,w_2,v_3\in\mathcal{B}_S(H)$. Similarly, the vertex $w_4$ of $H$ is an interior vertex of the branches $v_3,w_4,v_3;$ $v_3,w_4,v_1\in\mathcal{B}_S(H)$. As $\omega(e_{22})=1$ and $\omega(e_{44})=1$ theorem \ref{theorem-2} implies that $\sigma\big(\mathcal{X}_S(H)\big)=\sigma(H)\cup\{1,1\}.$

We note that $G=\mathcal{X}_S(H)$ in figure \ref{fig9} is the graph $G$ in figure \ref{fig2}. Hence, $\sigma(G)=\{2,-1,1,1,1,0\}$ implying $\sigma(H)=\{2,-1,1,0\}$.
\end{example}

The principle idea behind an isospectral expansion is the following. If $G\in\mathbb{G}$ and $S\in st(G)$ then the set of branches $\mathcal{B}_S(G)$ is uniquely defined. However, there are typically many other graphs $H$ with the same branch structure as $G$, i.e. $S\in st (H)$ such that $\mathcal{B}_S(H)\simeq\mathcal{B}_S(G)$.

An isospectral expansion of $G$ over $S$ is then a graph $H=\mathcal{X}_S(G)$ with identical branch structure but with the following restriction: The branches of $\mathcal{B}_S(H)$ are pairwise independent and every vertex of $H$ belongs to a branch in $\mathcal{B}_S(H)$. That is, any vertex of $\bar{S}$ in $H$ is part of exactly one branch in $\mathcal{B}_S(H)$.

Hence, given a graph $G$ and structural set $S$ we can algorithmically construct the expansion $\mathcal{X}_S(G)$ as follows. Start with the vertices $S$. If $\beta\in\mathcal{B}_{ij}(G;S)$ then both $v_i,v_j\in S$. Construct a path (or cycle) from $v_i$ to $v_j$ with weight sequence $\Omega(\beta)$ with \textit{new} interior vertices. By new we mean vertices that do not already appear on the graph we are constructing. Repeat this for each $\beta\in\mathcal{B}_{ij}(G;S)$. The resulting graph is the isospectral expansion $\mathcal{X}_S(G)$.

Importantly, an isospectral expansion is only one example of an isospectral graph transformation that preserves the weight set of a graph. Many other weight preserving isospectral transformations are possible. Moreover, other isospectral graph transformations that modify the weight set of a graph but restrict these weights to a particular subsets of $\mathbb{W}[\lambda]$ are also possible (see \cite{BW10}).

\section{Networks as Dynamical Systems}
As mentioned in the introduction, the dynamics of a network can be analyzed in terms of three key features; (i) the internal (local) dynamics of the network elements, (ii) the interactions between the network elements, and (iii) the topology or structure of the graph of interactions of the network.

To study the dynamics of networks our first task is to establish a mathematical framework for the investigation of networks as dynamical systems. This is done following the approach given in \cite{Afriamovich07}.

Let $i\in \mathcal{I}=\{1,\dots,n\}$ and $T_i: X_i\rightarrow X_i$ be maps on the complete metric space $(X_i,d)$ where
\begin{equation}\label{eq0.1}
L_i=\sup_{x_i\neq y_i\in X_i}\frac{d(T_i(x_i),T_i(y_i))}{d(x_i,y_i)}<\infty.
\end{equation}
Let $(T,X)$ denote the direct product of the local systems $(T_i,X_i)$ over $\mathcal{I}$ on the complete metric space $(X,d_{max})$ where for $\textbf{x},\textbf{y}\in X$
$$d_{max}(\textbf{x},\textbf{y})=\max_{i\in\mathcal{I}}\{d(x_i,y_i)\}.$$

\begin{definition}\label{interaction}
A map $F:X \rightarrow X$ is called an \textit{interaction} if for every $j\in \mathcal{I}$ there exists a nonempty collection of indices $\mathcal{I}_j\subseteq\mathcal{I}$ and a continuous function
$$F_j:\bigoplus_{i\in \mathcal{I}_j} X_i\rightarrow X_j,$$
that satisfies the following Lipschitz condition for constants $\Lambda_{ij}\geq 0:$
\begin{equation}\label{eq2.3}
d\big(F_j(\textbf{x}|_{\mathcal{I}_j}),F_j(\textbf{y}|_{\mathcal{I}_j})\big)\leq \sum_{i\in \mathcal{I}_j} \Lambda_{ij} d(x_i,y_i)
\end{equation}
for all $\textbf{x},\textbf{y}\in X$ where $\textbf{x}|_{\mathcal{I}_j}$ is the restriction of $\textbf{x}\in X$ to $\bigoplus_{i\in \mathcal{I}_j} X_i$.
Then the (\textit{interaction}) map $F$ is defined as follows:
$$F(\textbf{x})_j=F_j(\textbf{x}|_{\mathcal{I}_j}), \ \ j\in \mathcal{I}, \ \ i\in\mathcal{I}_j.$$
\end{definition}

\begin{definition}\label{netdef}
The superposition $\mathcal{F}=F\circ T$ generates the dynamical system $(\mathcal{F},X)$ which is a \textit{dynamical network}.
\end{definition}

The constants $\Lambda_{ij}$ in definition \ref{interaction} form the \textit{Lipschitz matrix} $\Lambda\in\mathbb{R}^{n\times n}$ where the entry $\Lambda_{ij}=0$ if $i\notin\mathcal{I}_j$.

\begin{remark}
Suppose the interaction $F:X\rightarrow X$ is continuously differentiable and each $X_i\subseteq\mathbb{R}$. If $DF$ is the matrix of first partial derivatives of $F$ then the constants
$$\Lambda_{ij}=\max_{\textbf{x}\in X}|(DF)_{ji}(\textbf{x})|$$
satisfy condition (\ref{eq2.3}) for the interaction $F$.
\end{remark}

\begin{definition}
Let $F:X\rightarrow X$ be an interaction. The graph $\Gamma_{F}=(V,E,\omega)$ with $V=\{v_1,\dots,v_n\}$, $E=\{e_{ij}: i\in \mathcal{I}_j, \ j\in\mathcal{I}\}$, and $\omega(e_{ij})=1$ for $e_{ij}\in E$ is called the \textit{graph of interactions} of $F$.
\end{definition}

We note that each vertex $v_i\in V$ of the graph $\Gamma_F=(V,E,\omega)$ corresponds to the $i$th component (coordinate) of the dynamical network $(\mathcal{F},X)$. Moreover, there is an edge $e_{ij}\in E$ if and only if the $j$th coordinate of the interaction $F(\textbf{x})$ depends on the $i$th coordinate of $\textbf{x}$.

\begin{example}\label{ex1}
Let $F:[0,1]^4\rightarrow[0,1]^4$ be the parameterized interaction given by
\begin{equation}\label{label1}
F(\mathbf{x};\alpha)=\left[
\begin{array}{c}
1-\alpha x_1x_4\\
1-\alpha x_1x_3\\
1-\alpha x_2x_3\\
1-\alpha x_1x_3
\end{array}
\right]  \ \ \text{for} \ \ \alpha\in[0,1].
\end{equation}
Using the Lipschitz constants $\Lambda_{ij}=\max_{\textbf{x}\in X}|(DF)_{ji}(\textbf{x})|$ the interaction $F$ has the Lipschitz matrix
\begin{equation}\label{label2}
\Lambda=\left[
\begin{array}{cccc}
\alpha&\alpha&0&\alpha\\
0&0&\alpha&0\\
0&\alpha&\alpha&\alpha\\
\alpha&0&0&0
\end{array}
\right].
\end{equation}
The graph of interactions $\Gamma_F$ of $F$ is the graph shown in figure \ref{fig2001} (left).
\end{example}

\subsection{Stability of Dynamical Networks}
 In this section we give sufficient conditions under which a dynamical network $(\mathcal{F},X)$ has simple dynamics. By simple we mean that the system $(\mathcal{F},X)$ has a globally attracting fixed point.

\begin{definition}
The dynamical network $(\mathcal{F},X)$ has a \textit{globally attracting fixed point} $\tilde{\textbf{x}}\in X$ if for any $\textbf{x}\in X$, $$\displaystyle{\lim_{k\rightarrow\infty}d_{max}\big(\mathcal{F}^k(\textbf{x}),\tilde{\textbf{x}}\big)=0}.$$ If $(\mathcal{F},X)$ has a globally attracting fixed point we say it is \textit{globally stable}.
\end{definition}

Recall that the constants $L_i$ and $\Lambda_{ij}$ come from (\ref{eq0.1}) and (\ref{eq2.3}) for the local systems $(T,X)$ and interaction $F$ respectively. For the dynamical network $(\mathcal{F},X)$ we define the matrix $M_\mathcal{F}=\Lambda^T\cdot diag[L_1,\dots,L_n]$.  Hence,

$$M_\mathcal{F}=  \left( \begin{array}{cccc}
\Lambda_{11}L_1 & \dots & \Lambda_{n1}L_n \\
\vdots & \ddots & \vdots \\
\Lambda_{1n}L_1 & \dots & \Lambda_{nn}L_n \end{array} \right).$$

Let $\rho(M_\mathcal{F})$ denote the \textit{spectral radius} of the matrix $\Lambda$, i.e. if $\sigma(M_\mathcal{F})$ are the eigenvalues of $\Lambda$ then
$$\rho(M_\mathcal{F})=\max\{|\lambda|:\lambda\in\sigma(M_\mathcal{F})\}.$$

\begin{theorem}\label{stability}
If $\rho\big(M_\mathcal{F}\big)<1$ then the dynamical network $(\mathcal{F},X)$ has a globally attracting fixed point.
\end{theorem}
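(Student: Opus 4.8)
The plan is to convert the spectral hypothesis $\rho(M_\mathcal{F})<1$ into a genuine contraction estimate for $\mathcal{F}=F\circ T$ with respect to a cleverly weighted version of $d_{max}$, and then to read off the globally attracting fixed point from the Banach fixed point theorem. The linear-algebraic input is that $M_\mathcal{F}$ is entrywise nonnegative (since $\Lambda_{ij}\geq 0$ and each $L_i\geq 0$), so Perron--Frobenius-type reasoning is available.

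First I would record the componentwise Lipschitz estimate satisfied by $\mathcal{F}$. Writing $\mathbf{d}(\mathbf{x},\mathbf{y})$ for the column vector with entries $d(x_i,y_i)$, the bound (\ref{eq2.3}) on each $F_j$ composed with the contraction-type bound $d(T_i(x_i),T_i(y_i))\leq L_i\,d(x_i,y_i)$ from (\ref{eq0.1}) gives, coordinatewise,
\[
d\big(\mathcal{F}(\mathbf{x})_j,\mathcal{F}(\mathbf{y})_j\big)=d\big(F_j(T(\mathbf{x})|_{\mathcal{I}_j}),F_j(T(\mathbf{y})|_{\mathcal{I}_j})\big)\leq\sum_{i\in\mathcal{I}_j}\Lambda_{ij}L_i\,d(x_i,y_i)=\big(M_\mathcal{F}\,\mathbf{d}(\mathbf{x},\mathbf{y})\big)_j,
\]
so that $\mathbf{d}(\mathcal{F}(\mathbf{x}),\mathcal{F}(\mathbf{y}))\leq M_\mathcal{F}\,\mathbf{d}(\mathbf{x},\mathbf{y})$ in the componentwise order, using that $(M_\mathcal{F})_{ji}=\Lambda_{ij}L_i$ and that $\Lambda_{ij}=0$ off $\mathcal{I}_j$.

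The heart of the proof, and the main obstacle, is to manufacture from $\rho(M_\mathcal{F})<1$ a strictly positive weight vector $\mathbf{w}=(w_1,\dots,w_n)$ together with a constant $\theta\in[0,1)$ satisfying $M_\mathcal{F}\,\mathbf{w}\leq\theta\mathbf{w}$ componentwise; this is the only place the global spectral hypothesis is truly used. Two routes are available. The Perron--Frobenius route applies the theorem to the strictly positive perturbation $M_\mathcal{F}+\epsilon J$ (with $J$ the all-ones matrix), whose positive Perron eigenvector $\mathbf{w}_\epsilon$ obeys $M_\mathcal{F}\,\mathbf{w}_\epsilon\leq(M_\mathcal{F}+\epsilon J)\mathbf{w}_\epsilon=\rho(M_\mathcal{F}+\epsilon J)\,\mathbf{w}_\epsilon$, where $\rho(M_\mathcal{F}+\epsilon J)<1$ for small $\epsilon$ by continuity of the spectral radius. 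The elementary route uses the Neumann series: since $\rho(M_\mathcal{F})<1$ the matrix $(I-M_\mathcal{F})^{-1}=\sum_{k\geq 0}M_\mathcal{F}^{\,k}$ is entrywise nonnegative, so $\mathbf{w}:=(I-M_\mathcal{F})^{-1}\mathbf{1}\geq\mathbf{1}>0$ and $M_\mathcal{F}\,\mathbf{w}=\mathbf{w}-\mathbf{1}\leq\theta\mathbf{w}$ with $\theta=1-1/\max_i w_i<1$. I would favor the second route for being self-contained.

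With such a $\mathbf{w}$ in hand I would introduce the weighted metric $d_\mathbf{w}(\mathbf{x},\mathbf{y})=\max_i d(x_i,y_i)/w_i$. Because the finite set $\{w_i\}$ is bounded away from $0$ and from $\infty$, $d_\mathbf{w}$ is bi-Lipschitz equivalent to $d_{max}$, so $(X,d_\mathbf{w})$ is again complete. Combining the componentwise estimate with $M_\mathcal{F}\,\mathbf{w}\leq\theta\mathbf{w}$ yields, for each coordinate $j$,
\[
\frac{d(\mathcal{F}(\mathbf{x})_j,\mathcal{F}(\mathbf{y})_j)}{w_j}\leq\frac{1}{w_j}\sum_i (M_\mathcal{F})_{ji}\,w_i\,\frac{d(x_i,y_i)}{w_i}\leq d_\mathbf{w}(\mathbf{x},\mathbf{y})\,\frac{(M_\mathcal{F}\mathbf{w})_j}{w_j}\leq\theta\,d_\mathbf{w}(\mathbf{x},\mathbf{y}),
\]
and taking the maximum over $j$ gives $d_\mathbf{w}(\mathcal{F}(\mathbf{x}),\mathcal{F}(\mathbf{y}))\leq\theta\,d_\mathbf{w}(\mathbf{x},\mathbf{y})$. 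Thus $\mathcal{F}$ is a $\theta$-contraction of the complete space $(X,d_\mathbf{w})$, and the Banach fixed point theorem supplies a unique $\tilde{\mathbf{x}}$ with $\mathcal{F}^k(\mathbf{x})\to\tilde{\mathbf{x}}$ in $d_\mathbf{w}$, hence in $d_{max}$, for every $\mathbf{x}\in X$. This is exactly the statement that $\tilde{\mathbf{x}}$ is a globally attracting fixed point, so $(\mathcal{F},X)$ is globally stable.
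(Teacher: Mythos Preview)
Your argument is correct and is essentially the standard way this type of result is proved: pass from the vector inequality $\mathbf{d}(\mathcal{F}(\mathbf{x}),\mathcal{F}(\mathbf{y}))\leq M_\mathcal{F}\,\mathbf{d}(\mathbf{x},\mathbf{y})$ to a genuine contraction by choosing a positive weight vector subordinated to $M_\mathcal{F}$, then invoke Banach's fixed point theorem. Both the Perron--Frobenius route and the Neumann-series route you outline are valid; the latter is indeed cleaner here since $M_\mathcal{F}\geq 0$ entrywise and $\rho(M_\mathcal{F})<1$ immediately give $(I-M_\mathcal{F})^{-1}\mathbf{1}\geq\mathbf{1}$.

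Note, however, that the present paper does not actually supply a proof of this theorem: it explicitly states that proofs not appearing here are deferred to \cite{BW10} (and to \cite{Afriamovich07} for the dynamical-network framework). So there is no in-paper argument to compare yours against. That said, the approach in those references is the same weighted-metric/contraction mapping argument you have written down, so your proposal matches what one would find upon following the citation.
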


In the following example we consider a parameterized dynamical network and describe how the network's stability depends on this parameter.

\begin{example}\label{ex2}
Let $T_i:[0,1]\rightarrow [0,1]$ be the map $T_i(x_i)=\sin(\pi x_i)$ for $1\leq i\leq 4$. One can check that the constant $L_i=\pi$ satisfy (\ref{eq0.1}) for each local system $T_i$.

Let $F:[0,1]^4\rightarrow[0,1]^4$ be the interaction given by (\ref{label1}). Using the Lipschitz matrix $\Lambda$ given by (\ref{label2}) the matrix
$$M_{\mathcal{F}}=\left[
\begin{array}{cccc}
\alpha\pi&\alpha\pi&0&\alpha\pi\\
0&0&\alpha\pi&0\\
0&\alpha\pi&\alpha\pi&\alpha\pi\\
\alpha\pi&0&0&0
\end{array}
\right].$$
As one can compute $\rho(M_\mathcal{F})=2\alpha\pi$ implying that the dynamical network $(\mathcal{F},X)$ has a globally attracting fixed point for any parameter value $\alpha<1/2\pi$.
\end{example}

\subsection{Dynamical Network Expansions}
In this section we consider whether it is possible to transform a dynamical network while maintaining the dynamic properties of the network. Our goal is to show (i) that such transformations exist and are analogous to the graph expansion in section 3 and (ii) that these transforms allow for improved estimates on whether the original (untransformed) dynamical network has a unique global attractor.

As defined in section 4 a dynamical network $\mathcal{F}=F\circ T$ is the composition of the network's local dynamics $T$ and the interaction $F$. However, if the system has no local dynamics, i.e. $T=id$ is the identity map, then the dynamical network $\mathcal{F}$ is simply the interaction $F$.

Conversely, any composition $\mathcal{F}=F\circ T$ can be considered to be an interaction. Writing $\mathcal{F}=(F\circ T)\circ id$ the dynamical network $(\mathcal{F},X)$ is simply the interaction $\mathcal{F}=F\circ T$ with no local dynamics.

\begin{remark}\label{remark1}
If $(\mathcal{F},X)$ is considered as a dynamical network with no local dynamics then $\mathcal{M}_\mathcal{F}=\Lambda$ for any constants $\Lambda_{ij}$ satisfying (\ref{eq2.3}) for $\mathcal{F}$. Hence, $(\mathcal{F},X)$ has a globally attracting fixed point if $\rho(\Lambda)<1$.
\end{remark}

Without loss in generality for the remainder of this section we consider $(\mathcal{F},X)$ to be a dynamical network with no local dynamics. That is, the component
$$\mathcal{F}_j:\bigoplus_{i\in\mathcal{I}_j}X_i\rightarrow X \ \ \text{for} \ \ 1\leq j\leq n.$$
Alternatively we write
$$\mathcal{F}_j(\textbf{x}|_{\mathcal{I}_j})=\mathcal{F}_j(x_{j_1},\dots,x_{j_m}), \ \ \text{where} \ \ \mathcal{I}_j=\{j_1,\dots,j_m\}.$$
Note that if we \textit{replace} the variable $x_{j_i}$ of $\mathcal{F}_j$ by the function $f(y_1,\dots,y_k)$ the result is the function
$$\mathcal{F}_j(x_{j_1},\dots,x_{j_{i-1}},f(y_1,\dots,y_k),x_{j_{i+1}},\dots,x_{j_m})$$
having variables $x_{j_1},\dots,x_{j_{i-1}},y_1,\dots,y_k,x_{j_{i+1}},\dots,x_{j_m}$. Additionally, if the sequence $\underline{\gamma}=\ell_1,\dots,\ell_N$ let
$$\mathcal{F}_{j;\underline{\gamma}}(x_{j_1},\dots,x_{j_m})=\mathcal{F}_j(x_{j_1,\underline{\gamma}},\dots,x_{j_m,\underline{\gamma}}).$$
That is, the variables of the function $\mathcal{F}_{j;\underline{\gamma}}$ are indexed by the sequences $j_i,\ell_1,\dots,\ell_N$ for $1\leq i\leq m$.

\begin{definition}
For $G=(V,E,\omega)$ let $S\in st(G)$. The set $S$ is a \textit{complete structural set} of $G$ if\\
(i) each cycle of $G$, including loops, contains a vertex in $S$;\\
(ii) $\omega(e_{ii})\neq\lambda$ for each $v_i\in\bar{S}$; and\\
(iii) each vertex of $V$ belongs to a branch of $\mathcal{B}_S(\Gamma_{\mathcal{F}})$.
\end{definition}

The difference between a structural set and a complete structural set of a graph $G$ is the following. If $S$ is simply a structural set of $G$ then loops of $G$ need not contain a vertex of $S$. However, if $S$ is a complete structural set of $G$ then every cycle of $G$ including loops contains a vertex in $S$. Moreover, each vertex of the graph must belong to some branch of $\mathcal{B}_S(\Gamma_{\mathcal{F}})$.

For $G\in\mathbb{G}$ let $st_0(G)$ denote the set of all complete structural sets of $G$. Also, if the complete structural set $S=\{v_1,\dots,v_m\}$ let $\mathcal{I}_S=\{1,\dots,m\}$ denote the \emph{index set} of $S$.

\begin{definition}\label{admissable}
For $S\in st_0(\Gamma_{\mathcal{F}})$ the set
\begin{equation}\label{eq.add}
\mathcal{A}_S(\mathcal{F})=\{\ell_1,\dots,\ell_N:v_{\ell_1},\dots,v_{\ell_N}\in\mathcal{B}_S(\Gamma_\mathcal{F}), \ N>2\}
\end{equation}
is the set of \textit{admissible sequences} of $\mathcal{F}$ with respect to $S$.
\end{definition}

Let $(\mathcal{F},X)$ be a dynamical network with graph of interactions $\Gamma_\mathcal{F}=(V,E,\omega)$ and suppose $S\in st_0(\Gamma_\mathcal{F})$. For $j\in\mathcal{I}_S$ let $\mathcal{F}_{\left<j,1\right>}$ be the function
$$\mathcal{F}_j=\mathcal{F}_j(x_{j_i},\dots,x_{j_m})$$
in which each variable $x_{j_\ell}$ is replaced by $x_{j_\ell,j}$ if $j_\ell\notin\mathcal{I}_S$.

For $i>1$ let $\mathcal{F}_{\left<j,i\right>}$ be the function
$$\mathcal{F}_{\left<j,i-1\right>}=\mathcal{F}_{\left<j,i-1\right>}(x_{\underline{\gamma}_1},\dots,x_{\underline{\gamma}_t})$$
in which each $x_{\underline{\gamma}_\ell}=x_{\ell_1,\dots,\ell_N}$ is replaced by the function $\mathcal{F}_{\ell_1;\underline{\gamma}_\ell}$ if $\ell_1\notin\mathcal{I}_S$. If $\ell_1\in\mathcal{I}_S$ for each $1\leq\ell\leq t$ then define $(\mathcal{X}_S\mathcal{F})_j=\mathcal{F}_{\left<j,{i-1}\right>}$.

Let $\underline{\gamma}=\ell_1,\dots,\ell_N\in\mathcal{A}_S(\mathcal{F})$. For $1<i<|\underline{\gamma}|=N$ define the $N-2$ spaces
$$X_{i;\underline{\gamma}}=X_{\ell_1}.$$
Additionally, define the functions
$$\mathcal{X}_S\mathcal{F}_{i;\underline{\gamma}}(x_{i-1,\underline{\gamma}})=x_{i-1,\underline{\gamma}}.$$
By way of notation we let
\begin{equation}\label{eq2.4}
X_{N-1,\underline{\gamma}}=X_{\underline{\gamma}}, \  \mathcal{X}_S\mathcal{F}_{N-1,\underline{\gamma}}=\mathcal{X}_S\mathcal{F}_{\underline{\gamma}}, \ \text{and} \ x_{1,\underline{\gamma}}=x_{\ell_1}.
\end{equation}

\begin{definition}\label{expansion}
Suppose $S\in st_0(\Gamma_\mathcal{F})$. Let
$$
\mathcal{X}_S\mathcal{F}=\Big(\bigoplus_{j\in\mathcal{I}_S}\mathcal{X}_S\mathcal{F}_j\Big) \oplus\Big(\bigoplus_{
\begin{smallmatrix}
\underline{\gamma}\in\mathcal{A}_S(\mathcal{F})\\
1<i<|\underline{\gamma}|
\end{smallmatrix}} \mathcal{X}_S\mathcal{F}_{i;\underline{\gamma}}\Big).$$
and
$$X_S=\Big(\bigoplus_{j\in\mathcal{I}_S}X_j\Big)\oplus\Big(\bigoplus_{
\begin{smallmatrix}
\underline{\gamma}\in\mathcal{A}_S(\mathcal{F})\\
1<i<|\underline{\gamma}|
\end{smallmatrix}}X_{i;\underline{\gamma}}\Big).$$
The dynamical network $(\mathcal{X}_S\mathcal{F},X_S)$ is called the \emph{dynamical network expansion} of $(\mathcal{F},X)$ with respect to $S$.
\end{definition}

\begin{figure}
  \begin{center}
    \begin{overpic}[scale=.38]{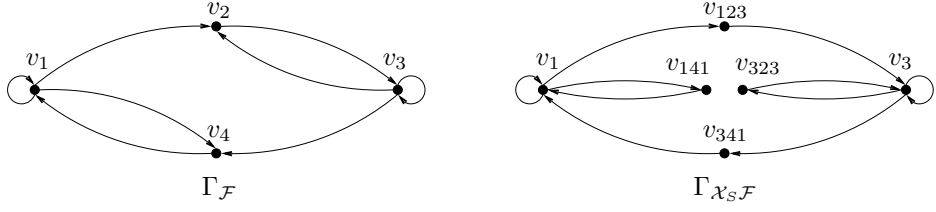}
    \put(2,13){$v_1$}
    \put(21.5,18.5){$v_2$}
    \put(40.5,13){$v_3$}
    \put(21.5,5.25){$v_4$}
    \put(21,-1){$\Gamma_{\mathcal{F}}$}

    \put(57,13){$v_1$}
    \put(95,13){$v_3$}
    \put(71,12.5){$v_{141}$}
    \put(78.5,12.5){$v_{323}$}
    \put(75,5.25){$v_{341}$}
    \put(75,18.5){$v_{123}$}
    \put(74,-1){$\Gamma_{\mathcal{X}_S\mathcal{F}}$}
    \end{overpic}
  \end{center}
\caption{The graph of interactions $\Gamma_\mathcal{F}$ and $\Gamma_{\mathcal{X}_S\mathcal{F}}$ of $(\mathcal{F},X)$ and $(\mathcal{X}_S\mathcal{F},X_S)$ in example \ref{ex13}.}\label{fig2001}
\end{figure}

\begin{example}\label{ex13}
Consider the dynamical network $(\mathcal{F},X)$ where
$$\mathcal{F}(\mathbf{x})=\left[\begin{array}{l}
\mathcal{F}_1(x_1,x_4)\\
\mathcal{F}_2(x_1,x_3)\\
\mathcal{F}_3(x_2,x_3)\\
\mathcal{F}_4(x_1,x_4)
\end{array}
\right].$$
As this network has the same form as the dynamical network in example \ref{ex1} it has the graph of interactions $\Gamma_\mathcal{F}=(V,E,\omega)$ shown in figure \ref{fig2001} (left). Moreover, the set $S=\{v_1,v_3\}$ is a complete structural set of $\Gamma_\mathcal{F}$. To see this note that the cycles of $\Gamma_\mathcal{F}$ form the set
$$\{v_1,v_1; v_3,v_3; v_1,v_4,v_1; v_1,v_2,v_3; v_3,v_4,v_1; v_3,v_2,v_3\}$$
As (i) each cycle of $\Gamma_{\mathcal{F}}$ contains either $v_1$ or $v_3$ and (ii) each vertex in $V$ belongs to the path $v_1,v_2,v_3$ or $v_3,v_4,v_1$ then $S\in st_0(\Gamma_{\mathcal{F}})$. Hence, the expansion $(\mathcal{X}_S\mathcal{F},X)$ is well defined.

To construct this expansion we first consider the components of $\mathcal{F}$ indexed by $\mathcal{I}_S=\{1,3\}$. For $\mathcal{F}_1=\mathcal{F}_1(x_1,x_4)$ note that $x_4$ is indexed by an element not in $\mathcal{I}_S$. Hence, $\mathcal{F}_{\left<1,1\right>}=\mathcal{F}_1(x_1,x_{41})$. Replacing $x_{41}$ by the function $\mathcal{F}_{4;41}=\mathcal{F}_2(x_{141},x_{341})$ yields $\mathcal{F}_{\left<1,2\right>}=\mathcal{F}_1(x_1,\mathcal{F}_4(x_{141},x_{341}))$. Since each variable of $\mathcal{F}_{\left<1,2\right>}$ is indexed by a sequence beginning with an element of $\mathcal{I}_S$ then
$$\mathcal{X}_S\mathcal{F}_1=\mathcal{F}_1(x_1,\mathcal{F}_4(x_{141},x_{341})).$$
Similarly, the function $\mathcal{X}_S\mathcal{F}_3$ can be shown to be
$$\mathcal{X}_S\mathcal{F}_3=\mathcal{F}_3(\mathcal{F}_2(x_{123},x_{323}),x_3).$$

As $\mathcal{A}_S(\mathcal{F})=\{121,123,321,323\}$ then for any $\underline{\gamma}\in\mathcal{A}_S(\mathcal{F})$ there is a single function $\mathcal{X}_S\mathcal{F}_{2,\underline{\gamma}}$ corresponding to $\underline{\gamma}$ given by
$$\mathcal{X}_S\mathcal{F}_{\underline{\gamma}}(x_i)=x_i \ \ \text{for} \ \ \underline{\gamma}=i,2,j.$$ Following definition \ref{expansion} the expansion $\mathcal{X}_S\mathcal{F}$ is given by
\begin{equation}\label{eqnext}
\mathcal{X}_S\mathcal{F}(\textbf{x})=
\left[
\begin{array}{l}
\mathcal{X}_{S}\mathcal{F}_{1}(x_1,x_{141},x_{341})\\
\mathcal{X}_{S}\mathcal{F}_{3}(x_{123},x_{323},x_3)\\
\mathcal{X}_{S}\mathcal{F}_{141}(x_1)\\
\mathcal{X}_{S}\mathcal{F}_{123}(x_1)\\
\mathcal{X}_{S}\mathcal{F}_{341}(x_3)\\
\mathcal{X}_{S}\mathcal{F}_{323}(x_3)\\
\end{array}
\right]=
\left[
\begin{array}{l}
\mathcal{F}_1(x_1,\mathcal{F}_4(x_{141},x_{341}))\\
\mathcal{F}_3(\mathcal{F}_2(x_{123},x_{323}),x_3)\\
x_1\\
x_1\\
x_3\\
x_3\\
\end{array}
\right]
\end{equation}
where $X_S=\big(X_1\oplus X_3\big)\oplus\big(X_{141}\oplus X_{341}\oplus X_{123}\oplus X_{323}\big)$ The graph of interactions $\Gamma_{\mathcal{X}_S\mathcal{F}}$ is shown in figure \ref{fig2001} (right).
\end{example}

Note that the isospectral expansion $\mathcal{X}_S(\Gamma_\mathcal{F})=\Gamma_{\mathcal{X}_S\mathcal{F}}$. This is in fact true in general.

\begin{proposition}\label{expand}
Suppose that $(\mathcal{X}_S\mathcal{F},X_S)$ is a dynamical network expansion of $(\mathcal{F},S)$. Then $\mathcal{X}_S(\Gamma_\mathcal{F})=\Gamma_{\mathcal{X}_S\mathcal{F}}$.
\end{proposition}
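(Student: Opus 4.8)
The plan is to prove the proposition by showing that $\Gamma_{\mathcal{X}_S\mathcal{F}}$ satisfies the three defining conditions of an isospectral expansion (Definition \ref{def10}) with respect to $S$; since $\mathcal{X}_S(\Gamma_\mathcal{F})$ is specified only up to isomorphism, exhibiting $\Gamma_{\mathcal{X}_S\mathcal{F}}$ as a representative of that class is precisely what the equality $\mathcal{X}_S(\Gamma_\mathcal{F})=\Gamma_{\mathcal{X}_S\mathcal{F}}$ asserts. First I would read off the vertex set of $\Gamma_{\mathcal{X}_S\mathcal{F}}$ directly from Definition \ref{expansion}: there is one vertex $v_j$ for each $j\in\mathcal{I}_S$ and one vertex $v_{i;\underline{\gamma}}$ for each pair $(i,\underline{\gamma})$ with $\underline{\gamma}\in\mathcal{A}_S(\mathcal{F})$ and $1<i<|\underline{\gamma}|$. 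Because an admissible sequence $\underline{\gamma}=\ell_1,\dots,\ell_N$ is exactly a branch of $\mathcal{B}_S(\Gamma_\mathcal{F})$ with $N>2$, and its interior positions $1<i<N$ index precisely the interior vertices of that branch, this vertex set is in natural bijection with the vertices produced by the algorithmic construction of $\mathcal{X}_S(\Gamma_\mathcal{F})$, namely $S$ together with one fresh interior vertex for each interior vertex of each branch.

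The core of the argument is to check that this bijection preserves edges, and the work splits by the type of target vertex. For an interior vertex $v_{i;\underline{\gamma}}$, the defining relation $\mathcal{X}_S\mathcal{F}_{i;\underline{\gamma}}(x_{i-1;\underline{\gamma}})=x_{i-1;\underline{\gamma}}$ together with the conventions (\ref{eq2.4}) show that its only incoming coordinate is $x_{i-1;\underline{\gamma}}$, which equals $x_{\ell_1}$ when $i=2$. This produces exactly the edges $v_{\ell_1}\to v_{2;\underline{\gamma}}\to\cdots\to v_{N-1;\underline{\gamma}}$, i.e. all internal edges of the path associated to $\underline{\gamma}$, and no others. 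For a vertex $v_j$ with $j\in\mathcal{I}_S$ I would analyze the recursive substitution defining $\mathcal{X}_S\mathcal{F}_j$ from $\mathcal{F}_{\langle j,1\rangle}$ through the steps $\langle j,i\rangle$: by induction on $i$ the variable indices carried by $\mathcal{F}_{\langle j,i\rangle}$ are the initial (backward) segments of branches ending at $v_j$, those whose leading index already lies in $\mathcal{I}_S$ being final, since the rule $x_{\underline{\gamma}_\ell}\mapsto\mathcal{F}_{\ell_1;\underline{\gamma}_\ell}$ prepends a predecessor of $\ell_1$ and halts exactly when $\ell_1\in\mathcal{I}_S$. Consequently the variables of $\mathcal{X}_S\mathcal{F}_j$ are $x_k$ for each length-two branch $v_k,v_j$ with $k\in\mathcal{I}_S$, and $x_{\underline{\gamma}}=x_{N-1;\underline{\gamma}}$ for each branch $\underline{\gamma}$ ending at $j$; these give precisely the incoming edges $v_k\to v_j$ and $v_{N-1;\underline{\gamma}}\to v_j$, which are the final edges of the branch-paths in $\mathcal{X}_S(\Gamma_\mathcal{F})$.

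With the bijection shown to carry edges to edges in both directions, I would verify the three conditions. Condition (i), $\mathcal{B}_S(\Gamma_\mathcal{F})\simeq\mathcal{B}_S(\Gamma_{\mathcal{X}_S\mathcal{F}})$, follows because each branch $\beta=v_{\ell_1},\dots,v_{\ell_N}$ of $\Gamma_\mathcal{F}$ corresponds to the path $v_{\ell_1},v_{2;\beta},\dots,v_{N-1;\beta},v_{\ell_N}$ of $\Gamma_{\mathcal{X}_S\mathcal{F}}$ (or to $v_{\ell_1},v_{\ell_2}$ when $N=2$), and since $\Gamma_\mathcal{F}$ carries only unit weights and the interior vertices $v_{i;\underline{\gamma}}$ carry no loops, the weight sequences $\Omega$ agree. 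Condition (ii) is immediate, as interior vertices are indexed by the pair $(i,\underline{\gamma})$, so distinct branches use disjoint interior vertices and are independent. Condition (iii) holds because every vertex of $\Gamma_{\mathcal{X}_S\mathcal{F}}$ lies in $S$ or on the branch $\underline{\gamma}$ that indexes it. I would also record that $S\in st(\Gamma_{\mathcal{X}_S\mathcal{F}})$: vertices of $\bar S$ carry no loops, and every non-loop cycle, being a concatenation of branch-paths that begin and end in $S$, meets $S$.

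The hard part will be the inductive bookkeeping of the second paragraph: one must prove that the index sequences generated by the repeated substitutions are in exact correspondence with the branches of $\Gamma_\mathcal{F}$ ending at $v_j$, and that the recursion terminates. Termination uses the complete-structural-set hypothesis, for $G|_{\bar S}$ has no non-loop cycles and, $S$ being complete, no loops either, so the restriction to $\bar S$ is acyclic and backward walks staying in $\bar S$ have bounded length. The correspondence itself requires carefully matching the ``replace the leading index'' rule of Definition \ref{expansion} with the prepend-a-vertex description of a branch. Once this dictionary between variable indices and branches is established, all three conditions reduce to direct inspection.
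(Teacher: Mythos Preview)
The paper does not prove this proposition; it states the result and, per the introductory remark that ``Proofs of all mathematical statements not found in this paper can be found in \cite{BW10},'' defers the argument to that reference. There is therefore no proof in the paper to compare against.

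Your strategy is the natural one and is correct: show that $\Gamma_{\mathcal{X}_S\mathcal{F}}$ satisfies the three conditions of Definition~\ref{def10} relative to $S$, then invoke the uniqueness (up to isomorphism) of isospectral expansions. Your identification of the vertex set via Definition~\ref{expansion} and the conventions~(\ref{eq2.4}), and your edge analysis splitting on the target-vertex type, are both accurate and match the structure visible in Example~\ref{ex13} and Figure~\ref{fig2001}. The part you flag as hard---the inductive correspondence between the variable indices produced by the substitution rule and the branches of $\Gamma_\mathcal{F}$ terminating at $v_j$, with termination coming from acyclicity of $\Gamma_\mathcal{F}|_{\bar S}$ under the complete-structural-set hypothesis---is indeed where the content lies, and your outline of it is right. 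One small completion: condition~(iii) of Definition~\ref{def10} also requires that every vertex of $\Gamma_\mathcal{F}$ lie on a branch of $\mathcal{B}_S(\Gamma_\mathcal{F})$; you only explicitly check this for $\Gamma_{\mathcal{X}_S\mathcal{F}}$, but for $\Gamma_\mathcal{F}$ it is exactly clause~(iii) in the definition of a complete structural set and so comes for free from $S\in st_0(\Gamma_\mathcal{F})$.
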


A natural question to ask is whether the expansion $(\mathcal{X}_S\mathcal{F},X_S)$ and the initial dynamical network $(\mathcal{F},X)$ have similar dynamics.

\begin{theorem}\label{gafp}
Suppose $({\mathcal{X}_S\mathcal{F}},X_S)$ is a dynamical network expansion of $(\mathcal{F},X)$. If $({\mathcal{X}_S\mathcal{F}},X_S)$ has a globally attracting fixed point then $(\mathcal{F},X)$ has a globally attracting fixed point.
\end{theorem}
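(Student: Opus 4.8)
The plan is to exhibit the original network $(\mathcal{F},X)$ as a \emph{factor} of its expansion $(\mathcal{X}_S\mathcal{F},X_S)$. Concretely, I would construct a continuous map $\phi\colon X_S\to X$ satisfying the intertwining identity $\phi\circ\mathcal{X}_S\mathcal{F}=\mathcal{F}\circ\phi$, together with the weaker-than-surjectivity property that the image $\phi(X_S)$ eventually absorbs every $\mathcal{F}$-orbit. Granting these two facts the theorem is immediate: if $\tilde{\mathbf{y}}$ is the globally attracting fixed point of $\mathcal{X}_S\mathcal{F}$, then $\tilde{\mathbf{x}}:=\phi(\tilde{\mathbf{y}})$ is fixed for $\mathcal{F}$, since $\mathcal{F}(\tilde{\mathbf{x}})=\mathcal{F}(\phi(\tilde{\mathbf{y}}))=\phi(\mathcal{X}_S\mathcal{F}(\tilde{\mathbf{y}}))=\phi(\tilde{\mathbf{y}})=\tilde{\mathbf{x}}$, and for any $\mathbf{x}\in X$ one writes a sufficiently high iterate as $\mathcal{F}^{K}(\mathbf{x})=\phi(\mathbf{y})$, whence $\mathcal{F}^{K+n}(\mathbf{x})=\phi\big((\mathcal{X}_S\mathcal{F})^{n}(\mathbf{y})\big)\to\phi(\tilde{\mathbf{y}})=\tilde{\mathbf{x}}$ by continuity of $\phi$ and global attraction of $\tilde{\mathbf{y}}$. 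Thus $\tilde{\mathbf{x}}$ is globally attracting for $(\mathcal{F},X)$.

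To build $\phi$ I would read off the structure of Definition \ref{expansion}. The coordinates of $X_S$ are those indexed by $j\in\mathcal{I}_S$ together with the intermediate coordinates $x_{i;\underline{\gamma}}$, and by construction the latter evolve by the pure delay rule $\mathcal{X}_S\mathcal{F}_{i;\underline{\gamma}}(x_{i-1,\underline{\gamma}})=x_{i-1,\underline{\gamma}}$, i.e. they are delay lines carrying values back along each branch toward its initial $S$-vertex. I would then define $\phi$ by $\phi(\mathbf{y})_j=y_j$ for $v_j\in S$, and, for $v_i\in\bar{S}$ lying interior to a branch, by reconstructing the original coordinate $x_i$ as $\mathcal{F}_i$ applied to the values carried by the delay-coordinates corresponding to the in-neighbours of $v_i$, the $\bar{S}$-valued arguments being expanded recursively in exactly the manner of the substitution defining $\mathcal{X}_S\mathcal{F}_j$. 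Because $S$ is a \emph{complete} structural set every cycle meets $S$, so each branch is finite and this recursion terminates at $S$-indexed coordinates; hence $\phi$ is a finite composition of the continuous maps $\mathcal{F}_i$ and coordinate projections and is therefore continuous. Note that Proposition \ref{expand} guarantees $\mathcal{X}_S(\Gamma_\mathcal{F})=\Gamma_{\mathcal{X}_S\mathcal{F}}$, so the branch/admissible-sequence combinatorics organizing this construction is precisely that of the isospectral expansion $\mathcal{X}_S$.

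The intertwining identity is verified coordinatewise. For $v_j\in S$ it reduces to the defining substitution of $\mathcal{X}_S\mathcal{F}_j$: one outer application of $\mathcal{F}_j$ whose $\bar{S}$-arguments are supplied by the delay-coordinates, which is matched on the $\mathcal{F}$-side by feeding $\phi$'s reconstructed $\bar{S}$-values into $\mathcal{F}_j$. For $v_i\in\bar{S}$ it reduces to the observation that one step of $\mathcal{X}_S\mathcal{F}$ advances each branch index by one position, which is exactly what a single application of $\mathcal{F}$ does to the reconstruction $\phi(\mathbf{y})_i$. The eventual-coverage claim is the statement $\mathcal{F}^{K}(X)\subseteq\phi(X_S)$ for $K=\max_{\underline{\gamma}\in\mathcal{A}_S(\mathcal{F})}(|\underline{\gamma}|-2)$: reconstructing an interior vertex at depth $d$ along a branch requires $d$ mutually consistent layers of preimages, and a point of the form $\mathcal{F}^{K}(\mathbf{z})$ supplies, through $\mathbf{z}$ and its successive images, precisely the layered values needed to populate a $\phi$-preimage; in the length-three case of Example \ref{ex13} one has $K=1$.

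I expect the main obstacle to be the general, bookkeeping-heavy definition of $\phi$ and the verification of $\phi\circ\mathcal{X}_S\mathcal{F}=\mathcal{F}\circ\phi$ when interior vertices are shared among several branches, when a vertex has several in-edges, and when branches are long. These complications are exactly what the expansion resolves by duplicating interior vertices into independent branches, so the argument is combinatorial rather than analytic: the crux is to show that the recursive substitution producing $\mathcal{X}_S\mathcal{F}$ is inverted, layer by layer, by the recursive reconstruction producing $\phi$, with the delay-coordinates serving as the registers that make one step of $\mathcal{X}_S\mathcal{F}$ correspond to one step of $\mathcal{F}$ after collapse.
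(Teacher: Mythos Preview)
The paper does not contain its own proof of this theorem; it defers to \cite{BW10}. I therefore cannot compare your argument to the paper's, but I can assess the approach on its merits.

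Your factor-map strategy has a genuine gap, and it is not merely bookkeeping. The expansion $\mathcal{X}_S\mathcal{F}$ duplicates each interior vertex $v_i\in\bar{S}$ into as many independent copies as there are branches through $v_i$, and each copy carries its \emph{own} delay coordinate. When you define $\phi(\mathbf{y})_i$ you must choose which of these duplicates to feed into $\mathcal{F}_i$; but the components $(\mathcal{X}_S\mathcal{F})_j$ for different $j\in\mathcal{I}_S$ use \emph{different} copies. Concretely, take $S=\{v_1,v_2\}$, $\bar{S}=\{v_3\}$, with edges $v_1\to v_3$, $v_3\to v_1$, $v_3\to v_2$, $v_2\to v_1$. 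Then $\mathcal{X}_S\mathcal{F}_1=\mathcal{F}_1\big(x_2,\mathcal{F}_3(x_{131})\big)$ while $\mathcal{X}_S\mathcal{F}_2=\mathcal{F}_2\big(\mathcal{F}_3(x_{132})\big)$. Any single definition of $\phi(\mathbf{y})_3$, say $\mathcal{F}_3(y_{131})$, gives $\mathcal{F}(\phi(\mathbf{y}))_2=\mathcal{F}_2(\mathcal{F}_3(y_{131}))$, which disagrees with $\phi(\mathcal{X}_S\mathcal{F}(\mathbf{y}))_2=\mathcal{F}_2(\mathcal{F}_3(y_{132}))$ whenever $y_{131}\neq y_{132}$. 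So the intertwining $\phi\circ\mathcal{X}_S\mathcal{F}=\mathcal{F}\circ\phi$ fails on $X_S$. The duplication you describe as \emph{resolving} the difficulty is in fact what creates it: a single value $\phi(\mathbf{y})_i$ cannot simultaneously match all of its duplicated avatars in the expanded system.

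The repair is to restrict $\phi$ to the invariant ``diagonal'' $D\subset X_S$ on which all delay coordinates representing the same history coincide; on $D$ the intertwining does hold, the globally attracting fixed point lies in $D$ (since after one step all sibling delay coordinates are set equal to the same $S$-coordinate), and your eventual-coverage argument then gives $\mathcal{F}^K(X)\subseteq\phi(D)$. Alternatively, and this is closer to the orbit-embedding argument one expects in \cite{BW10}, run the construction in the opposite direction: given $\mathbf{x}\in X$, use $K$ iterates of $\mathcal{F}$ to populate a point $\psi(\mathbf{x})\in X_S$ (assigning every sibling delay coordinate the same past value), verify that the $S$-coordinates of $(\mathcal{X}_S\mathcal{F})^n\psi(\mathbf{x})$ coincide with those of $\mathcal{F}^{n+K}(\mathbf{x})$, conclude that the $S$-coordinates of the $\mathcal{F}$-orbit converge, and then recover convergence of the $\bar{S}$-coordinates from continuity of the $\mathcal{F}_i$ along the finite branches. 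This avoids having to define $\phi$ on all of $X_S$ at once.
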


The following is an immediate corollary to theorem \ref{gafp} and remark \ref{remark1}.

\begin{corollary}\label{cor30}
Let $({\mathcal{X}_S\mathcal{F}},X_S)$ be a dynamical network expansion of $(\mathcal{F},X)$. Suppose the constants $\tilde{\Lambda}_{ij}$ satisfy (\ref{eq2.3}) for $\mathcal{X}_S\mathcal{F}$. If $\rho(\tilde{\Lambda})<1$ then $(\mathcal{F},X)$ has a globally attracting fixed point.
\end{corollary}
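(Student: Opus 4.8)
The plan is to chain the two results cited in the statement, with no independent argument required. First I would apply Remark \ref{remark1} not to $\mathcal{F}$ but to the expanded network $(\mathcal{X}_S\mathcal{F}, X_S)$. Under the standing convention adopted at the start of this section, every network under consideration—including the expansion produced by Definition \ref{expansion}—is a dynamical network with no local dynamics, so Remark \ref{remark1} is directly applicable to $\mathcal{X}_S\mathcal{F}$. Reading that remark with $\mathcal{X}_S\mathcal{F}$ in place of $\mathcal{F}$ and the constants $\tilde{\Lambda}$ in place of $\Lambda$, it states precisely that $\rho(\tilde{\Lambda}) < 1$ forces $(\mathcal{X}_S\mathcal{F}, X_S)$ to have a globally attracting fixed point. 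This uses the hypothesis that the $\tilde{\Lambda}_{ij}$ satisfy the Lipschitz condition (\ref{eq2.3}) for $\mathcal{X}_S\mathcal{F}$.

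Second, I would feed this conclusion into Theorem \ref{gafp}. That theorem asserts that whenever the expansion $(\mathcal{X}_S\mathcal{F}, X_S)$ has a globally attracting fixed point, so does the original network $(\mathcal{F}, X)$. Its hypothesis has just been verified in the previous step, so its conclusion—that $(\mathcal{F}, X)$ has a globally attracting fixed point—is exactly the assertion of the corollary.

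The proof is therefore a one-line composition, and the only matters requiring attention are bookkeeping rather than substance. One must confirm (i) that $\mathcal{X}_S\mathcal{F}$ genuinely qualifies as an interaction with no local dynamics, so that Remark \ref{remark1} is applicable to it, and (ii) that the supplied constants $\tilde{\Lambda}_{ij}$ are admissible Lipschitz constants for $\mathcal{X}_S\mathcal{F}$ in the sense of (\ref{eq2.3}). The former is guaranteed by the convention of this section together with the construction in Definition \ref{expansion}, and the latter is granted by hypothesis. I do not expect any genuine obstacle here—the result is a straight specialization of Remark \ref{remark1} followed by an application of Theorem \ref{gafp}, which is precisely why it is stated as an immediate corollary rather than proved from scratch.
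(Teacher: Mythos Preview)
Your proposal is correct and matches the paper's approach exactly: the paper states the result as ``an immediate corollary to theorem~\ref{gafp} and remark~\ref{remark1}'' with no further argument, and your two-step chaining of those results is precisely what is intended.
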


The question then is whether there is any advantage in considering a dynamical network expansion over the original unexpanded network. As it turns out, dynamical network expansions always allow for better estimates (or at least no worse) of a dynamical network's global stability.

\begin{theorem}\label{last} \textbf{(Improved Stability Estimates for Dynamical Networks)}
Let $({\mathcal{X}_S\mathcal{F}},X_S)$ be a dynamical network expansion of $(\mathcal{F},X)$. Suppose there exist constants $\Lambda_{ij}$ satisfying (\ref{eq2.3}) for $\mathcal{F}$. Then there are constants $\tilde{\Lambda}_{ij}$ satisfying (\ref{eq2.3}) for $\mathcal{X}_S\mathcal{F}$ such that $\rho(\tilde{\Lambda})\leq\rho(\Lambda)$.
\end{theorem}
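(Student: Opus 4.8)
The plan is to build the constants $\tilde{\Lambda}_{ij}$ from the composition structure of $\mathcal{X}_S\mathcal{F}$ and then to identify the spectrum of $\tilde{\Lambda}$ with that of an isospectral expansion of a numerically weighted graph carrying the constants $\Lambda_{ij}$, to which Theorem \ref{theorem-2} applies directly. First I would introduce the weighted digraph $G_\Lambda$ having the same vertices and edges as $\Gamma_\mathcal{F}$ but with $\omega(e_{ij})=\Lambda_{ij}$, so that $M(G_\Lambda)=\Lambda$ and $S$ is a complete structural set of $G_\Lambda$ (it has exactly the cycle structure of $\Gamma_\mathcal{F}$, and by completeness no vertex of $\bar{S}$ carries a loop, i.e. $\omega(e_{ii})=0$ for every $v_i\in V-S$).

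Next, following Definition \ref{expansion}, each component $\mathcal{X}_S\mathcal{F}_j$ with $j\in\mathcal{I}_S$ is a composition of the maps $\mathcal{F}_\ell$ read off along the branches of $\mathcal{B}_S(\Gamma_\mathcal{F})$ that terminate at $v_j$, while the remaining components $\mathcal{X}_S\mathcal{F}_{i;\underline{\gamma}}$ are the coordinate (identity) maps forming the interior chains of those branches. Using the submultiplicativity of Lipschitz constants under composition I would set $\tilde{\Lambda}_{\beta,j}$ equal to the product $\prod\Lambda_{pq}$ taken over the edges $e_{pq}$ of the branch $\beta$ ending at $v_j$, and take the constant $1$ for each interior identity map. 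A direct estimate, applying (\ref{eq2.3}) repeatedly to the constituent maps $\mathcal{F}_\ell$, then shows these $\tilde{\Lambda}_{ij}$ satisfy (\ref{eq2.3}) for $\mathcal{X}_S\mathcal{F}$. This bookkeeping along the expansion is the most technical step, and I expect it to be the main obstacle.

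Finally I would compare the two weightings of the common underlying digraph $\mathcal{X}_S(\Gamma_\mathcal{F})=\Gamma_{\mathcal{X}_S\mathcal{F}}$ (Proposition \ref{expand}), namely $\tilde{\Lambda}$ and the weighted adjacency matrix $M(\mathcal{X}_S(G_\Lambda))$. Because the branches of an expansion are independent, every interior vertex lies on a single branch, so every cycle of the expansion traverses whole branches; hence each cycle, and more generally each vertex-disjoint union of cycles, receives the same weight product under $\tilde{\Lambda}$ as under $M(\mathcal{X}_S(G_\Lambda))$, in each case the product of the corresponding branch products. Since the characteristic polynomial of a weighted digraph is determined by exactly these products, $\tilde{\Lambda}$ is obtained from $M(\mathcal{X}_S(G_\Lambda))$ by redistributing weights within branches without changing branch products, so the two matrices share the same characteristic polynomial and hence the same spectrum.

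Applying Theorem \ref{theorem-2} to $G_\Lambda$, and using $\omega(e_{ii})=0$ for $v_i\in V-S$, then gives
$$\det\big(M(\mathcal{X}_S(G_\Lambda))-\lambda I\big)=\det(\Lambda-\lambda I)\,(-\lambda)^{\sum_{v_i\in V-S}(n_i-1)},$$
so that $\sigma(\tilde{\Lambda})=\sigma(\Lambda)\cup\{0,\dots,0\}$. Consequently $\rho(\tilde{\Lambda})=\max\{\rho(\Lambda),0\}=\rho(\Lambda)$, which establishes the claimed inequality $\rho(\tilde{\Lambda})\leq\rho(\Lambda)$; the spectral conclusion is immediate from Theorem \ref{theorem-2} once the Lipschitz admissibility of the product constants and the cycle-product comparison are in hand.
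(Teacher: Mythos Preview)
The paper does not include a proof of this theorem; like most of the results here it is deferred to \cite{BW10}, so there is no paper-internal argument to compare against directly.

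That said, your approach is sound and is the natural one given the tools the paper does develop (Proposition~\ref{expand} and Theorem~\ref{theorem-2}); in fact it yields the sharper conclusion $\rho(\tilde{\Lambda})=\rho(\Lambda)$ rather than merely an inequality. The recursive Lipschitz estimate you flag as the main obstacle is routine once written out: applying (\ref{eq2.3}) to $\mathcal{F}_j$ and then again to each substituted $\mathcal{F}_{\ell_1;\underline{\gamma}}$ produces, after unwinding, a sum over the variables of $\mathcal{X}_S\mathcal{F}_j$ in which the coefficient of $d(x_{\underline{\gamma}},y_{\underline{\gamma}})$ is exactly the edge-weight product along $\underline{\gamma}$. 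The spectral comparison---that $\tilde{\Lambda}$ and $M(\mathcal{X}_S(G_\Lambda))$ share a characteristic polynomial because they differ only by redistributing weights within branches while preserving branch products---is correct and rests on the standard Coates--Harary expansion of $\det(\lambda I-M)$ over vertex-disjoint cycle covers; in the expanded graph every cycle is a concatenation of complete branches precisely because each interior vertex lies on a unique branch and carries no loop.

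One small point to watch: the paper's convention is that edges carry nonzero weight, so if some $\Lambda_{ij}=0$ while $e_{ij}\in E(\Gamma_\mathcal{F})$, the graph $G_\Lambda$ is strictly a subgraph of $\Gamma_\mathcal{F}$ and condition~(iii) of the complete structural set definition (every vertex lies on a branch) could in principle fail for $G_\Lambda$. This does not affect the argument---Theorem~\ref{theorem-2} is really a statement about matrices, and zero-weight edges contribute nothing to cycle products---but it is cleanest to phrase the comparison directly at the level of the matrices $\Lambda$ and $\tilde{\Lambda}$ rather than through $G_\Lambda$.
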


\begin{example}\label{ex1500}
Let $(\mathcal{F},X)$ be the dynamical network given in example \ref{ex2} considered as a network without local dynamics. Hence,
$$\mathcal{F}(\textbf{x};\alpha)=
\left[
\begin{array}{l}
1-\alpha\sin(\pi x_1)\sin(\pi x_4)\\
1-\alpha\sin(\pi x_1)\sin(\pi x_3)\\
1-\alpha\sin(\pi x_2)\sin(\pi x_3)\\
1-\alpha\sin(\pi x_1)\sin(\pi x_3)
\end{array}
\right] \ \ \text{for} \ \ \alpha\in[0,1]$$
where $X=[0,1]^{4}$.

Note that the system $(\mathcal{F},X)$ has the same form as the dynamical network in example \ref{ex13}. Hence, for $S=\{v_1,v_3\}$ the expansion $(\mathcal{X}_S\mathcal{F},X_S)$ given by equation (\ref{eqnext}) is $$\mathcal{X}_S\mathcal{F}(\textbf{x};\alpha)=$$
\begin{equation}\label{nexteq}
\left[
\begin{array}{l}
\mathcal{X}_{S}\mathcal{F}_{1}\\
\mathcal{X}_{S}\mathcal{F}_{3}\\
\mathcal{X}_{S}\mathcal{F}_{141}\\
\mathcal{X}_{S}\mathcal{F}_{123}\\
\mathcal{X}_{S}\mathcal{F}_{341}\\
\mathcal{X}_{S}\mathcal{F}_{323}\\
\end{array}
\right]=
\left[
\begin{array}{l}
1-\alpha\sin(\pi x_1)\sin\big[\alpha\pi (1-\sin(\pi x_{121})\sin(\pi x_{321}))\big]\\
1-\alpha\sin\big[\alpha\pi(1-\sin(\pi x_{123})\sin(\pi x_{323}))\big]\sin(\pi x_3)\\
x_1\\
x_1\\
x_3\\
x_3\\
\end{array}
\right].
\end{equation}

Letting $\tilde{\Lambda}_{ij}=\max_{\textbf{x}\in X_S}|(D\mathcal{X}_S\mathcal{F})_{ji}(\mathbf{x})|$ one can compute that

$$\tilde{\Lambda}=\left[
\begin{array}{cccccc}
\alpha\pi\sin(\alpha\pi) &0&1&1&0&0\\
0&\alpha\pi\sin(\alpha\pi)&0&0&1&1\\
\alpha\pi\sin(\alpha\pi)&0&0&0&0&0\\
0&\alpha\pi\sin(\alpha\pi)&0&0&0&0\\
\alpha\pi\sin(\alpha\pi)&0&0&0&0&0\\
0&\alpha\pi\sin(\alpha\pi)&0&0&0&0
\end{array}
\right] \ \ \text{for} \ \ \alpha\leq1/2$$
where the rows (and columns) of $\tilde{\Lambda}$ are numbered as in equation (\ref{nexteq}). The spectral radius of $\tilde{\Lambda}$ is then given by
$$\rho(\tilde{\Lambda})=\alpha\pi\frac{\sqrt{34-2\cos(2\alpha\pi)}+2\sin(\alpha\pi)}{4}$$
As $\rho(\tilde{\Lambda})<1$ for $\alpha<0.185$ then theorem \ref{last} implies $(\mathcal{F},X)$ has a globally attracting fixed point for any parameter $\alpha<0.185$. Since $1/2\pi<0.185$ then this is better than the estimate gained by direct analysis of $(\mathcal{F},X)$ in example \ref{ex2}.
\end{example}

As a final example we consider a dynamical network of arbitrary size. To determine the stability of this system we require the following. For $A\in\mathbb{W}[\lambda]^{n\times n}$ define the sets
$$\mathcal{E}(A)=\bigcup^n_{i=1}\{\lambda\in \mathbb{C}:|\lambda-A_{ii}|\leq\sum_{j=1,j\neq i}^n| A_{ij}|\}$$

\begin{theorem}{\textbf{(Gershgorin \cite{Gershgorin31})}}\label{gershgorin}
Let $A\in\mathbb{C}^{n\times n}$. Then all eigenvalues of $A$ are contained in the set $\mathcal{E}(A)$.
\end{theorem}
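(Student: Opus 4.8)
The plan is to give the classical eigenvector argument, since $A\in\mathbb{C}^{n\times n}$ has numerical entries and the set $\mathcal{E}(A)$ is defined by row sums. Let $\lambda\in\sigma(A)$ with eigenvector $\mathbf{x}=(x_1,\dots,x_n)^T\neq 0$, so that $A\mathbf{x}=\lambda\mathbf{x}$. The key idea is to select the index where the eigenvector attains its maximum modulus; this single choice does all the work.

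First I would fix an index $i$ with $|x_i|=\max_{1\leq k\leq n}|x_k|$. Since $\mathbf{x}\neq 0$ we have $|x_i|>0$, which is exactly what lets us divide at the end. Writing out the $i$th coordinate of the equation $A\mathbf{x}=\lambda\mathbf{x}$ and isolating the diagonal term gives
$$
(\lambda-A_{ii})\,x_i=\sum_{j=1,\,j\neq i}^{n}A_{ij}\,x_j.
$$

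Next I would take absolute values and apply the triangle inequality, followed by the bound $|x_j|\leq|x_i|$ that holds for every $j$ by the maximality of $|x_i|$:
$$
|\lambda-A_{ii}|\,|x_i|=\Big|\sum_{j\neq i}A_{ij}\,x_j\Big|\leq\sum_{j\neq i}|A_{ij}|\,|x_j|\leq\Big(\sum_{j\neq i}|A_{ij}|\Big)|x_i|.
$$
Dividing through by $|x_i|>0$ yields $|\lambda-A_{ii}|\leq\sum_{j\neq i}|A_{ij}|$, which says precisely that $\lambda$ lies in the $i$th disk appearing in the union defining $\mathcal{E}(A)$, hence $\lambda\in\mathcal{E}(A)$.

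There is no genuine obstacle here; the result is elementary once the maximal-modulus coordinate is chosen. The only point requiring care is recording that the index $i$ may depend on the particular eigenvalue $\lambda$ (and its eigenvector), so the argument places $\lambda$ in \emph{some} Gershgorin disk rather than a fixed one — but since $\mathcal{E}(A)$ is the union over all $i$, this is all that is needed, and no claim is made about which disk contains which eigenvalue.
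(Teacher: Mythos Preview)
Your proof is correct and is the standard eigenvector argument for Gershgorin's theorem. The paper does not supply its own proof of this statement: it is quoted as a classical result with a citation to Gershgorin's original 1931 paper, and is then used as a black box inside the proof of the subsequent theorem on the reduced matrix $A_S$. So there is nothing to compare against; your argument would serve perfectly well as a self-contained verification of the cited result.
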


Let $A\in\mathbb{W}[\lambda]^{n\times n}$ be the adjacency matrix of the graph $G$. For $S\in st_0(G)$ we say $S\in st_0(A)$ and let $A_S$ be the adjacency matrix of the reduced graph $\mathcal{R}_S(G)$. The following theorem is an improvement of Gershgorin's result for estimating the nonzero eigenvalues of a complex valued matrix.

\begin{theorem}\label{gersh}
Let $A\in\mathbb{C}^{n\times n}$. If $S\in st_0(A)$ then the nonzero eigenvalues of $A$ are contained in the set $\mathcal{E}(A_S)$. Moreover, $\mathcal{E}(A_S)\subseteq\mathcal{E}(A)$.
\end{theorem}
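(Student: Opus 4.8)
The plan is to exploit the special structure forced by completeness of $S$: no vertex of $\bar{S}$ carries a loop. Indeed, condition (i) in the definition of a complete structural set forbids a loop at any $v_i\in\bar{S}$ (such a loop would be a cycle avoiding $S$), so $A_{ii}=0$ for every $v_i\in\bar{S}$. Hence every interior vertex of a branch in $\mathcal{B}_S(G)$ has zero loop weight, and formula (\ref{eq0.9}) shows that each branch product, and therefore each entry of $A_S=M(\mathcal{R}_S(G))$, is a complex constant divided by a power of $\lambda$. In particular the denominator of $\det(A_S-\lambda I)$ is a power of $\lambda$. Since $M(G)\in\mathbb{C}^{n\times n}$, Corollary \ref{cor1} gives $\sigma(A_S)=\sigma(A)-\sigma(G|_{\bar{S}})$ with $\sigma(G|_{\bar{S}})=\{0,\dots,0\}$, so the nonzero eigenvalues of $A$ are exactly the nonzero elements of $\sigma(A_S)$.

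For the first assertion I would fix a nonzero $\lambda_0\in\sigma(A)$, so that $\lambda_0\in\sigma(A_S)$ by the previous paragraph. Because $\lambda_0\neq0$ and the denominators occurring in $A_S$ are powers of $\lambda$, every entry $(A_S)_{ij}(\lambda_0)$ is a finite complex number and the denominator of $\det(A_S-\lambda I)$ does not vanish at $\lambda_0$; evaluating the Leibniz expansion at $\lambda_0$ therefore gives $\det\big(A_S(\lambda_0)-\lambda_0 I\big)=0$. Thus $\lambda_0$ is an eigenvalue of the ordinary complex matrix $A_S(\lambda_0)$, and classical Gershgorin (Theorem \ref{gershgorin}) places it in some disc $|\lambda_0-(A_S)_{ii}(\lambda_0)|\le\sum_{j\neq i}|(A_S)_{ij}(\lambda_0)|$. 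This is precisely the statement $\lambda_0\in\mathcal{E}(A_S)$.

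For the inclusion $\mathcal{E}(A_S)\subseteq\mathcal{E}(A)$ I would reduce one vertex at a time. Completeness guarantees that $\bar{S}$ contains no cycle, so eliminating a single $v_n\in\bar{S}$ cannot create a loop (or any cycle) on a remaining vertex of $\bar{S}$, since such a loop at $v_k$ would lift to a cycle $v_k,\dots,v_n,\dots,v_k$ lying in $\bar{S}$. Hence every vertex of $\bar{S}$ stays loop-free throughout, each single elimination is a legitimate reduction, and by Theorems \ref{theorem-1} and \ref{theorem-3} their composition equals $A_S$. By transitivity the claim reduces to a single-vertex reduction with $\bar{S}=\{v_n\}$ and $A_{nn}=0$, where the branch structure yields the Schur-complement form $(A_S)_{ij}(\lambda)=A_{ij}+A_{in}A_{nj}/\lambda$.

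The main obstacle is this single-vertex inequality. Fix $\lambda_0\in\mathcal{E}(A_S)$ with $\lambda_0\neq0$, realized through row $i$, and write $R_i=\sum_{j\neq i}|A_{ij}|$ and $R_n=\sum_{j\neq n}|A_{nj}|$. Bounding the radius of the $i$-th disc of $A_S$ from above by the triangle inequality and its center from below by the reverse triangle inequality, then substituting $A_{nn}=0$, the terms involving $|A_{ni}|$ cancel exactly and one obtains
\[
|\lambda_0-A_{ii}|\;\le\;\big(R_i-|A_{in}|\big)+\frac{|A_{in}|}{|\lambda_0|}\,R_n .
\]
Now split into cases. If $|\lambda_0|\ge R_n$ then $R_n/|\lambda_0|\le1$, so the right-hand side is at most $(R_i-|A_{in}|)+|A_{in}|=R_i$, placing $\lambda_0$ in the $i$-th Gershgorin disc of $A$. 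If instead $|\lambda_0|<R_n$, then since $A_{nn}=0$ we have $|\lambda_0-A_{nn}|=|\lambda_0|<R_n$, placing $\lambda_0$ in the $n$-th disc of $A$. Either way $\lambda_0\in\mathcal{E}(A)$; the boundary case $\lambda_0=0$ is handled by observing that $0$ lies in the $n$-th disc whenever $R_n>0$, while the $1/\lambda$ terms are absent when $R_n=0$. This completes the inclusion, and the two assertions together give the theorem.
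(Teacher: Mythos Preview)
Your proof is correct and follows essentially the same strategy as the paper: both arguments exploit the fact that $A_{ii}=0$ for $v_i\in\bar{S}$ to conclude that the entries of $A_S$ are polynomials in $1/\lambda$, then apply classical Gershgorin to $A_S(\lambda_0)$ for a nonzero eigenvalue $\lambda_0$; and both establish the inclusion $\mathcal{E}(A_S)\subseteq\mathcal{E}(A)$ by removing one vertex of $\bar{S}$ at a time and using the triangle and reverse triangle inequalities on the Schur-complement entries $A_{ij}+A_{in}A_{nj}/\lambda$. Your case split on $|\lambda_0|\ge R_n$ versus $|\lambda_0|<R_n$ is logically equivalent to the paper's contrapositive (``if $\lambda\notin\mathcal{E}_i$ then $\lambda\in\mathcal{E}_n$''), and you are in fact slightly more careful than the paper in justifying that $S$ remains a complete structural set after each single-vertex removal and in handling the boundary case $\lambda_0=0$.
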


\begin{proof}
Let $A\in\mathbb{C}^{n\times n}$ be the adjacency matrix of the graph $G$. For $S\in st_0(A)$ suppose $A_S=A_S(\lambda)\in\mathbb{W}[\lambda]^{\ell\times\ell}$. Then equations (\ref{eq0.9}) and (\ref{eq1.0}) imply the $ij$th entry of $A_S(\lambda)$ has the form
$$A_S(\lambda)_{ij}=\sum_{\beta\in\mathcal{B}_{ij}(G,S)}A_{12}\prod_{k=2}^{m-1}\frac{A_{k,k+1}}{\lambda}$$
where the sum is taken over all branches of the form $\beta=v_1,\dots,v_m$. Hence, for any nonzero eigenvalue  $\alpha\in\sigma(A)$ the matrix $A_S(\alpha)\in\mathbb{C}^{\ell\times\ell}$. Moreover, $\alpha\in\sigma(A_S(\lambda))$ by corollary 1 implying $\alpha\in\sigma(A_S(\alpha))$.

By an application of Gershgorin's theorem the inequality
$$|\alpha-A_S(\alpha)_{ii}|\leq\sum_{j=1,j\neq i}^\ell| A_S(\alpha)_{ij}|$$
holds for some $1\leq i\leq \ell$. Hence, $\alpha\in\mathcal{E}(A_S)$.

To verify that the region $\mathcal{E}(A_S)\subseteq\mathcal{E}(A)$ let $S=\{v_{n-\ell-1},v_{n-\ell},\dots,v_n\}$. For $S_k=\{v_{k+1},v_{k+2},\dots,v_n\}$ note that $S_1\in st_0(A)$ and $\mathcal{E}(A_{S_1})=\bigcup_{i=2}^n R_i$ where
$$R_i=\Big\{\lambda\in \mathbb{C}:\Big|\lambda-\Big(A_{ii}+\frac{A_{i1}A_{1i}}{\lambda}\Big)\Big|\leq\sum_{j=2,j\neq i}^n\Big| A_{ij}+\frac{A_{i1}A_{1j}}{\lambda}\Big|\Big\}$$
and $A_{11}=0$. The claim then is $R_i\subset \mathcal{E}_1\cup\mathcal{E}_i$ where
$$\mathcal{E}_i=\{\lambda\in \mathbb{C}:|\lambda-A_{ii}|\leq\sum_{j=1,j\neq i}^n| A_{ij}|\} \ \ \text{for} \ \ 1\leq i\leq n.$$
To see this suppose $\lambda\in R_i$ for fixed $\lambda\in\mathbb{C}$ and $2\leq i\leq n$. Hence,
$$\big|(\lambda-A_{ii})-\frac{A_{i1}A_{1i}}{\lambda}\big|\leq\sum_{j=2,j\neq i}^n\big|A_{ij}+\frac{A_{i1}A_{1j}}{\lambda}\big|.$$
By use of the triangle and reverse triangle inequality then
$$|\lambda-A_{ii}|-\big|\frac{A_{i1}A_{1i}}{\lambda}\big|\leq\sum_{j=2,j\neq i}^n|A_{ij}|+\sum_{j=2,j\neq i}^n\big|\frac{A_{i1}A_{1j}}{\lambda}\big|.$$
Collecting like terms, this implies
$$|\lambda-A_{ii}|+|A_{i1}|\leq\sum_{j=1,j\neq i}^n|A_{ij}|+\sum_{j=2}^n\big|\frac{A_{i1}A_{1j}}{\lambda}\big|.$$

If $\lambda\notin \mathcal{E}_i$ then $|\lambda-A_{ii}|>\sum_{j=1,j\neq i}^n| A_{ij}|$ implying
$$|\lambda-A_{ii}|+|A_{i1}|<|\lambda-A_{ii}|+\frac{|A_{i1}|}{|\lambda|}\sum_{j=2}^n|A_{1j}|.$$
Therefore, $|A_{i1}|\neq 0$ and $|\lambda|<\sum_{j=2}^n|A_{ij}|$. Since $A_{11}=0$ then $\lambda\in\mathcal{E}_1$. This verifies the claim that $R_i\subset \mathcal{E}_1\cup\mathcal{E}_i$ implying $\mathcal{E}(A_{S_1})\subseteq\mathcal{E}(A)$.

Note that each entry of $A_{S_1}(\lambda)_{ij}=A_{i1}A_{1j}/\lambda$ is defined at $\lambda\neq0$. Therefore, the same argument can be used to show $\mathcal{E}((A_{S_1})_{S_2})\subseteq\mathcal{E}(A_{S_1})$. Continuing in this manner it follows that $\mathcal{E}(A_S)\subseteq\mathcal{E}(A)$ since by theorem \ref{theorem-3} the reduced matrix $A_S=(((A_{S_1})\dots)_{S_{n-\ell-2}})_S$.
\end{proof}

Theorem \ref{gersh} can used to estimate the spectral radius associated with an expanded dynamical network. From a computational point of view this can be especially useful when the system size is large as is shown in the following example.

\begin{example}\label{ex14}
Consider the dynamical network $(\mathcal{F},X)$ given by
$$
\mathcal{F}_j(x_{j-1},x_{j+1})=\cos\left(\frac{\pi}{4}x_{j-1}x_{j+1}\right) \ \ 1\leq j\leq 2n
$$
where $X=[0,1]^{2n}$ and $n\geq 2$. Here, the indices are taken$\mod2n$ and the system is assumed to have no local dynamics. Observe that the graph $\Gamma_\mathcal{F}=(V,E,\omega)$ shown in figure 7 (left) for $n=2 $ has a nearest neighbor structure of interactions with periodic boundary conditions.
$$\text{As} \ \ \max_{\textbf{x}\in X}|(D\mathcal{F})_{ji}(\textbf{x})|=
\begin{cases}
\frac{\pi}{4\sqrt{2}} \ \ \text{for} \ \ i=j\pm1\\
0 \ \ \text{otherwise}
\end{cases} \ \ \text{the matrix}$$

\vspace{0.05in}

$$\Lambda=\left[
\begin{array}{cccc}
0 & \frac{\pi}{4\sqrt{2}} & &  \frac{\pi}{4\sqrt{2}}\\
 \frac{\pi}{4\sqrt{2}} & 0 & \ddots &\\
 & \ddots & \ddots &\frac{\pi}{4\sqrt{2}}\\
\frac{\pi}{4\sqrt{2}} & & \frac{\pi}{4\sqrt{2}} & 0\\
\end{array}
\right]$$

\vspace{0.1in}

\noindent is a Lipschitz matrix of $(\mathcal{F},X)$.

Since $\Lambda$ has constant row sums given by $s=\pi/2\sqrt{2}$ then $s$ is an eigenvalue of $\Lambda$ corresponding to the eigenvector $[1 \dots 1]^T\in\mathbb{R}^{2n\times 1}$. However, $s>1$ so theorem \ref{stability} does not directly provide any conclusion about the dynamical network $(\mathcal{F},X)$.

However, $S=\{v_2,v_4,\dots v_{2n}\}$ is a complete structural set of $\Gamma_\mathcal{F}$ since $S$ is a structural set of $\Gamma_\mathcal{F}$ and $\Gamma_\mathcal{F}$ has no loops. Moreover, as
\begin{equation}\label{branch}
\mathcal{B}_S(\Gamma_\mathcal{F})=\{v_i,v_j,v_k: i\in\mathcal{I}_S, \ j=i\pm 1, k=j\pm 1\}
\end{equation}
where each index is taken$\mod 2n$ then each vertex of $\Gamma_\mathcal{F}$ belongs to a branch of $\mathcal{B}_S(\Gamma_\mathcal{F})$.

Note that if $j \in\mathcal{I}_S$ then $j\pm 1\notin\mathcal{I}_S$. Replacing the variables $x_{j-1}$ and $x_{j+1}$ of $\mathcal{F}_j$ by $x_{j-1,j}$ and $x_{j+1,j}$ respectively results in the function
$$\mathcal{F}_{\left<j,1\right>}=\mathcal{F}_j\big(x_{j-1,j},x_{j+1,j}\big).$$

Since the variables $x_{j-1,j}$ and $x_{j+1,j}$ of $\mathcal{F}_{\left<i,1\right>}$ are indexed by sequences beginning with elements not in $\mathcal{I}_S$ then they are replaced by $\mathcal{F}_{j-1;j-1,j}$ and $\mathcal{F}_{j+1;j+1,j}$ respectively to form $\mathcal{F}_{\left<j,2\right>}$. Hence,
$$\mathcal{F}_{\left<j,2\right>}=\mathcal{F}_{j}\big(\mathcal{F}_{j-1}(x_{j-2,j-1,j},x_{j,j-1,j}), \mathcal{F}_{j+1}(x_{j,j+1,j},x_{j+2,j+1,j})\big)$$
As $j\in\mathcal{I}_S$ implies $j\pm2\in\mathcal{I}_S$ then each variable of $\mathcal{F}_{\left<j,2\right>}$ is indexed by a sequence beginning with an element of $\mathcal{I}_S$. This in turn implies that $\mathcal{X}_S\mathcal{F}_j=\mathcal{F}_{\left<j,2\right>}$ for $j\in\mathcal{I}_S$.

\begin{figure}
  \begin{center}
    \begin{overpic}[scale=.45]{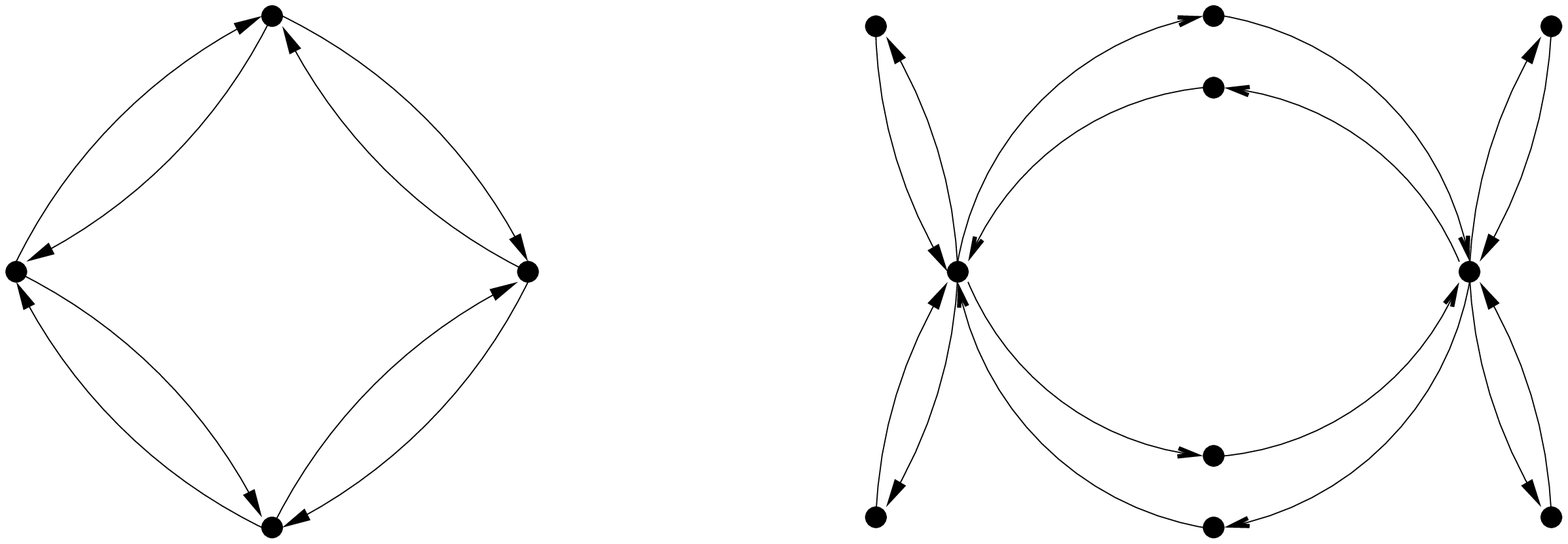}
    \put(16,40){$v_1$}
    \put(-3.5,21){$v_2$}
    \put(35,21){$v_4$}
    \put(16,2.5){$v_3$}
    \put(16,-2){$\Gamma_{\mathcal{F}}$}

    \put(56,21){$v_2$}
    \put(96,21){$v_4$}

    \put(52,39){$v_{212}$}
    \put(52,3){$v_{232}$}
    \put(96,39){$v_{414}$}
    \put(96,3){$v_{434}$}

    \put(75,2.5){$v_{432}$}
    \put(75,12){$v_{234}$}
    \put(75,31){$v_{412}$}
    \put(75,40){$v_{214}$}

    \put(74,-2){$\Gamma_{\mathcal{X}_S\mathcal{F}}$}
    \end{overpic}
  \end{center}
\caption{The graph of interactions $\Gamma_\mathcal{F}$ and $\Gamma_{\mathcal{X}_S\mathcal{F}}$ of $(\mathcal{F},X)$ and $(\mathcal{X}_S\mathcal{F},X_S)$ in example \ref{ex14} for $n=2$.}\label{fig2002}
\end{figure}

As $\mathcal{B}_S(\Gamma_\mathcal{F})$ is given by (\ref{branch}) then
$$\mathcal{A}_S(\mathcal{F})=\{i,j,k:i\in\mathcal{I}_S,j=i\pm1,k=j\pm1\}$$
where each index is taken mod $2n$ and $n\geq 2$. For each $i,j,k=\underline{\gamma}\in\mathcal{A}_S(\mathcal{F})$ there is then a single function corresponding to $\underline{\gamma}$ where
$$\mathcal{X}_S\mathcal{F}_{2,\underline{\gamma}}:X_{1;\underline{\gamma}}\rightarrow X_{2;\underline{\gamma}} \ \ \text{given by} \ \ \mathcal{X}_S\mathcal{F}_{2,\underline{\gamma}}(x_{2;\underline{\gamma}})=x_{2;\underline{\gamma}}.$$ By use of (\ref{eq2.4}) this function can be written as $\mathcal{X}_S\mathcal{F}_{\underline{\gamma}}(x_i)=x_i$.

Following definition \ref{expansion} the dynamical network expansion $\mathcal{X}_S\mathcal{F}:X_S\rightarrow X_S$ is given by

\begin{equation}\label{eq77}
\mathcal{X}_S\mathcal{F}(\mathbf{x})=
\left[
\begin{array}{c}
\mathcal{X}_{S}\mathcal{F}_{2}\\
\vdots\\
\mathcal{X}_{S}\mathcal{F}_{2n}
\end{array}
\right]
\bigoplus
\left[
\begin{array}{c}
\mathcal{X}_S\mathcal{F}_{2n,1,2}\\
\vdots\\
\mathcal{X}_S\mathcal{F}_{2,1,2n}
\end{array}
\right] \ \ \text{where}
\end{equation}

$$\mathcal{X}_S\mathcal{F}_{k,l,m}(x_k)=x_k \ \ \text{for} \ \ k,l,m\in\mathcal{A}_S(\mathcal{F}) \ \ \text{and}$$
$$\mathcal{X}_{S}\mathcal{F}j=\cos\left[\frac{\pi}{4}\cos\left(\frac{\pi}{4}x_{j-2,j-1,j}x_{j,j-1,j}\right) \cos\left(\frac{\pi}{4}x_{j,j+1,j}x_{j+2,j+1,j}\right)\right]$$
for $j\in\{2,4,\dots, 2n\}$. Moreover, the space
$$X_S=\Big(\bigoplus_{j=1}^n X_{2j}\Big)\oplus\Big(\bigoplus_{\underline{\gamma}\in\mathcal{A}_S(\mathcal{F})} X_{\underline{\gamma}}\Big).$$ The graph of interactions $\Gamma_{\mathcal{X}_S\mathcal{F}}$ is shown in figure 7 (right) for n=2.

Letting $\tilde{\Lambda}_{ij}=\max_{\textbf{x}\in X_S}|(D\mathcal{X}_S\mathcal{F})_{ji}(\mathbf{x})|$ one can compute that
$$\tilde{\Lambda}_{ij}=
\begin{cases}
\frac{\pi^2\sin(\pi/4\sqrt{2})}{16\sqrt{2}} \ \ \text{for} \ \ j\in\mathcal{I}_S,\  i\in\mathcal{A}_S(\mathcal{F})\\
\ \ 1 \ \ \ \ \ \ \ \ \ \ \ \ \ \text{for}  \ \ j=k,l,m; \ \ i=k\\
\ \ 0 \ \ \ \ \ \ \ \ \ \ \ \ \ \text{otherwise}
\end{cases}.$$

\begin{figure}
  \begin{center}
    \begin{overpic}[scale=.45]{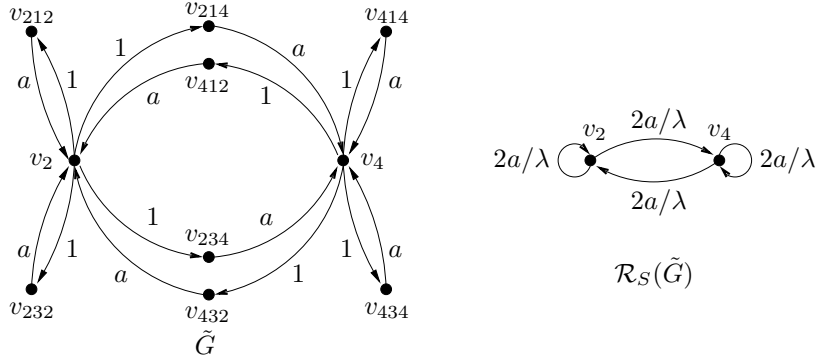}
    \put(0.5,21.5){$v_2$}
    \put(41.5,21.5){$v_4$}
    \put(21,-2){$\tilde{G}$}

    \put(-2,39.5){$v_{212}$}
    \put(-2,3){$v_{232}$}
    \put(42,39.5){$v_{414}$}
    \put(42,3){$v_{434}$}

    \put(19.75,2.5){$v_{432}$}
    \put(19.75,12){$v_{234}$}
    \put(19.75,31){$v_{412}$}
    \put(19.75,40.25){$v_{214}$}

    \put(11,7){$a$}
    \put(33,7){$1$}
    \put(11,35){$1$}
    \put(33,35){$a$}

    \put(15,14){$1$}
    \put(29,14){$a$}
    \put(15,29){$a$}
    \put(29,29){$1$}

    \put(-1,10){$a$}
    \put(5,10){$1$}
    \put(-1,31){$a$}
    \put(5,31){$1$}

    \put(39,10){$1$}
    \put(45,10){$a$}
    \put(39,31){$1$}
    \put(45,31){$a$}

    \put(69,25){$v_2$}
    \put(84.5,25){$v_4$}
    \put(73,7){$\mathcal{R}_S(
    \tilde{G})$}

    \put(75,16){$2a/\lambda$}
    \put(75,26){$2a/\lambda$}
    \put(58,21.25){$2a/\lambda$}
    \put(91,21.25){$2a/\lambda$}

    \end{overpic}
  \end{center}
\caption{The weighted graph $\tilde{G}$ and reduced graph $\mathcal{R}_S(\tilde{G})$ for $n=2$.}\label{fig2002}
\end{figure}

To compute the spectral radius of $\tilde{\Lambda}$ note that if $\tilde{G}$ is the graph with adjacency matrix $\tilde{\Lambda}$ then $S\in st_0(\tilde{G})$, i.e. $S\in st_0(\tilde{\Lambda})$. See figure \ref{fig2002} (left). Moreover, for $n>2$ the $n\times n$ matrix
$$\tilde{\Lambda}_S=\left[
\begin{array}{cccc}
2a/\lambda & a/\lambda & &  a/\lambda\\
a/\lambda & 2a/\lambda & \ddots &\\
 & \ddots & \ddots & a/\lambda\\
a/\lambda & & a/\lambda & 2a/\lambda\\
\end{array}
\right] \ \ \text{where} \ \ a=\frac{\pi^2\sin(\pi/4\sqrt{2})}{16\sqrt{2}}.$$
For $n=2$ the graph $\mathcal{R}_S(\tilde{G})$ is shown in figure \ref{fig2002} (right) having the adjacency matrix
$$\tilde{\Lambda}_S=\left[
\begin{array}{cc}
2a/\lambda & 2a/\lambda\\
2a/\lambda & 2a/\lambda\\
\end{array}
\right].$$

Using theorem \ref{gersh} the nonzero eigenvalues of $\tilde{\Lambda}$ are contained in the region
$$\mathcal{E}(\tilde{\Lambda}_S)=\{\lambda\in\mathbb{C}:|\lambda-2a/\lambda|\leq 2|a/\lambda|\}.$$
It therefore follows that $\rho(\tilde{\Lambda})\leq 2\sqrt{a}\approx.95<1$ implying $\rho(\mathcal{R}_S(\tilde{G}))<1$.

By theorem \ref{maintheorem} then $(\mathcal{X}_S\mathcal{F},X_S)$ has a globally attracting fixed point. Hence, the original unreduced system $(\mathcal{F},X)$ also has this property as well by theorem
\ref{gafp}. Note that if one uses the region $\mathcal{E}(\tilde{\Lambda})$ (i.e. Gershgorin's original theorem \cite{Gershgorin31}) one can only estimate that $\rho(\tilde{\Lambda})\leq 2$. See figure 9.
\end{example}

\begin{figure}
\begin{tabular}{cc}
    \begin{overpic}[scale=.5]{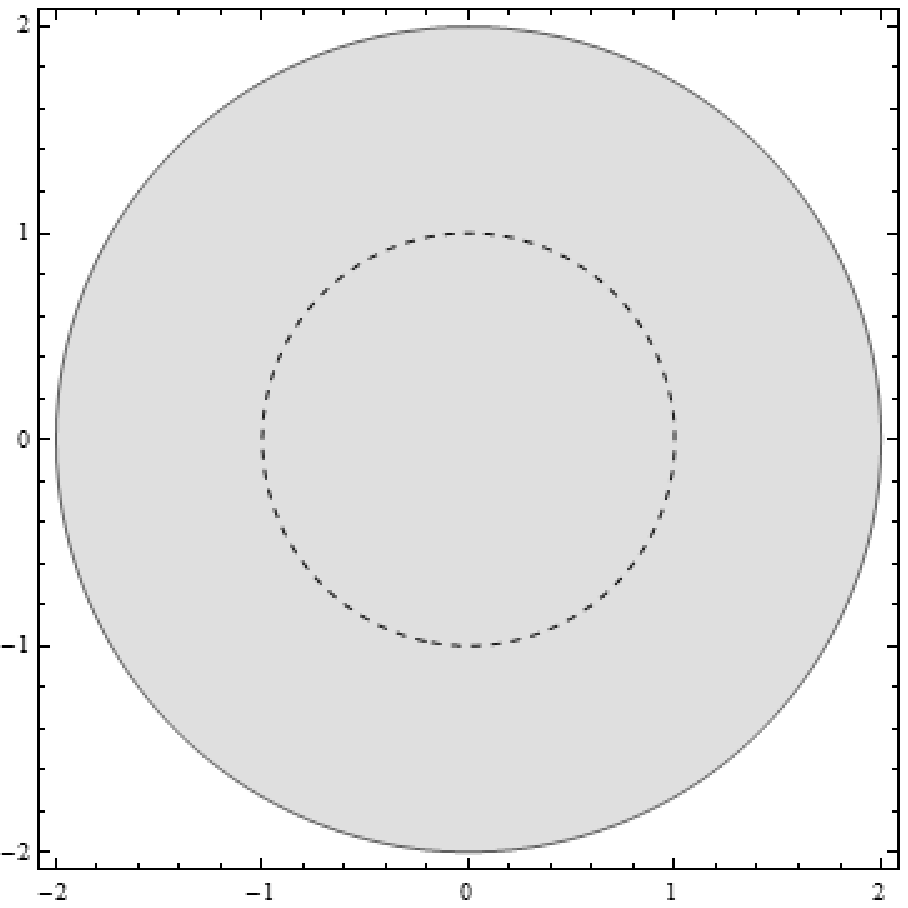}
    \put(44,-8){$\mathcal{E}(\tilde{\Lambda})$}
    \end{overpic} &
    \begin{overpic}[scale=.5]{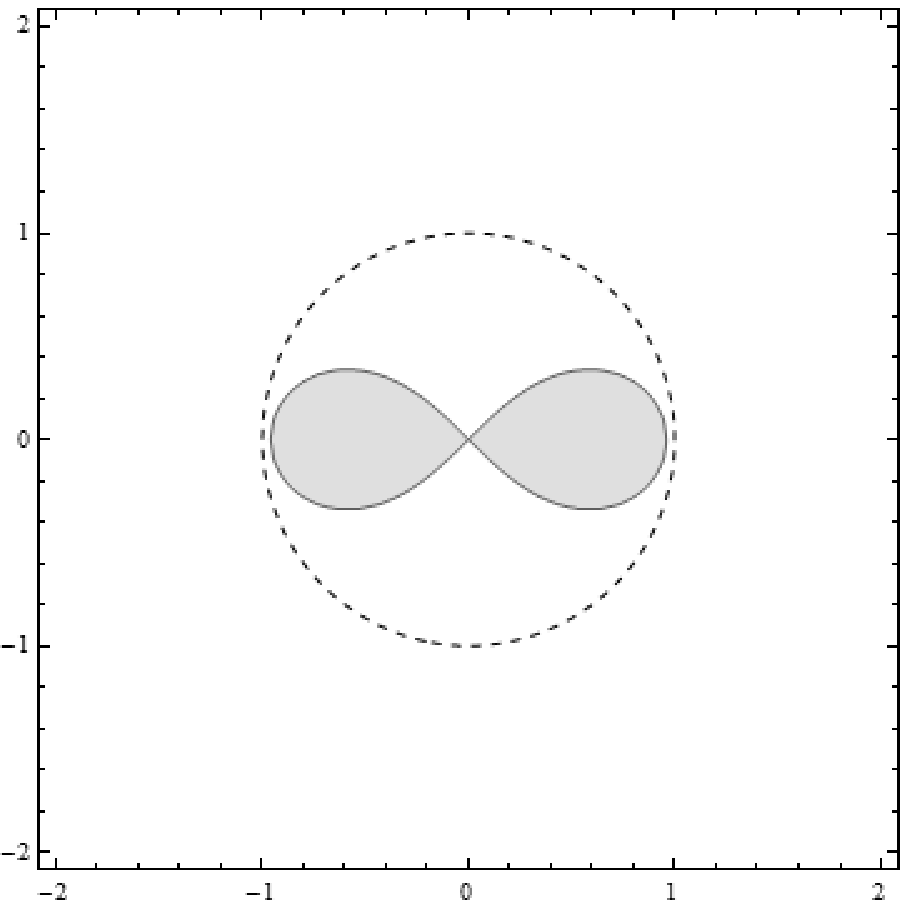}
    \put(44,-8){$\mathcal{E}(\tilde{\Lambda}_S)$}
    \end{overpic}
\end{tabular}
\caption{The regions $\mathcal{E}(\tilde{\Lambda})$ and $\mathcal{E}(\tilde{\Lambda}_S)$ from example 11, where the unit circle is indicated by the dashed lines.}
\end{figure}

We note that the method used in example 11 can be used in general for determining the stability of dynamical networks. To summarize, once a matrix $\tilde{\Lambda}$ corresponding to $(\mathcal{X}_S\mathcal{F},X_S)$ has been found one can use the reduced matrix $\tilde{\Lambda}_S$ to estimate $\rho(\tilde{\Lambda})$ via theorem \ref{gersh}.

Importantly, we note that the method of using network expansions generalizes the standard approach used for determining whether a network has a unique global attractor. As a final observation, in this section, we note that it is possible to sequentially expand a dynamical network and thereby sequentially improve ones estimate of whether the original (untransformed) network has a globally attracting fixed point.

\section{Concluding Remarks}
The large majority of real world networks are dynamic. Yet most studies deal only with the structure (topology) of these networks. As the spectrum is an important dynamical aspect of a network, isospectral graph transformations provide a way of studying the interplay between the topology of a network and its dynamics.

As a tool for investigating networks, isospectral transformations are quite flexible. Isospectral transformations can be used to reduce or expand the size of a network, considered as a weighted graph, while either modifying or maintaining the edge weights of the network. These \emph{isospectral graph reductions} and \emph{isospectral graph expansion} have the following useful properties and applications.

Isospectral graph reductions allow one the ability to uniquely reduce a network to any subsets of its vertex set while preserving the network spectrum. This in turn allows for the introduction of new equivalence relations on the space of all networks where two networks are equivalent if they can be reduced to the same network via some rule.

From the point of view of applications, such rules can be devised by experimentalists to compare different networks modulo some specific network structure. However, this requires the expertise of the experimentalist (biologist, physicist, etc.) to determine an appropriate set of network elements over which to reduce the network. Our procedure, therefore allows the expert the ability to devise and compare different network reductions, i.e. to compare different reduced networks corresponding to various reduction criteria. Such reduction criteria can be designed with respect to vertex or edge centrality, in/out degree, or any other network characteristic that is deemed important.

Isospectral graph expansions can also be used to enlarge a network while preserving its sets of edge weights. As a general multi-dimensional dynamical system has an associated graph structure (i.e. can be considered to be a dynamical network) it is also possible to use this procedure to expand such systems while preserving their dynamics. This can be done in various ways and can, in particular, be used to establish whether the system, e.g. dynamical network, has a globally attracting fixed point.

The new notions, results, and examples presented in this paper demonstrate that the theory of isospectral network transformations is applicable to larger class of networks (e.g. parameter dependent networks) than previously considered. However, we are confident that our approach can be developed even further. For instance, this approach is equally applicable to time-delayed networks (in progress).


\begin{thebibliography}{9}

\bibitem{Afriamovich07} V. Afraimovich and L. Bunimovich. Dynamical networks: interplay of topology, interactions, and local dynamics, \textit{Nonlinearity} \textbf{20} (2007) 1761-1771

\bibitem{Albert02} Albert R and Barab\'{a}si A-L 2002 Statistical mechanics of complex networks \emph{Rev. Mod. Phys.} \textbf{74} 47-97

\bibitem{BW09} L. A. Bunimovich, B. Z. Webb, Isospectral graph reductions and improved estimates of matrices' spectra, Linear Algebra and its Applications, Volume 437, Issue 7, 2012, 1429-1457

\bibitem{BW10} L. A. Bunimovich and B. Z. Webb, Isospectral Graph Transformations, Spectral Equivalence, and Global Stability of Dynamical Networks, \textit{Nonlinearity} \textbf{25}, 2012, 211-254.

\bibitem{Bru2010} R.A. Brualdi, Spectra of Digraphs, \emph{Linear Algebra and its Applications} \textbf{432} (2010) 2181-2213

\bibitem{Chazottes05} Chazottes J-R and Fernandes B (ed) 2005 Dynamics of Coupled Map Lattices and Related Spatially Extended Systems \textit{Lect. Notes in Physics} vol 671, (Berlin: Springer)

\bibitem{Dorogovtsev03} Dorogovtsev S and Mendes J 2003 Evolution of Networks: From Biological Networks to the Internet and WWW Oxford: Oxford Univ. Press

\bibitem{Faloutsos99} Faloutsos M, Faloutsos P and Faloutsos C 1999 On power-law relationship of the internet topology \textit{ACMSIGCOMM,`99, Comput. Commun. Rev.} \textbf{29} 251-263

\bibitem{Gershgorin31} Gershgorin S 1931 \"{U}ber die Abgrenzung der Eigenwerte einer Matrix, \textit{Izv. Akad. Nauk SSSR Ser. Mat.} \textbf{1} 749-754

\bibitem{Newman06} Newman M, Barab\'{a}si A-L and Watts D (ed) 2006 The Structure of Dynamic Networks (Princeton: Princeton Univ. Press)

\bibitem{Porter09} Porter M, Onnela J and Mucha P 2009 Communities in Networks \textit{AMS Notices} \textbf{56} 1082-1097, 1164-1166

\bibitem{Strogatz03} Strogatz S 2003 Sync: The Emerging Science of Spontaneous Order (New York: Hyperion)

\bibitem{Varga09} Varga R 2004 Gershgorin and His Circles (Germany: Springer-Verlag Berlin Heidelberg)

\bibitem{Watts99} Watts D 1999 Small Worlds: The Dynamics of Networks Between Order and Randomness (Princeton: Princeton Univ. Press)

\end{thebibliography}
\end{document}